\def\Xint#1{\mathchoice
    {\XXint\displaystyle\textstyle{#1}}%
    {\XXint\textstyle\scriptstyle{#1}}%
    {\XXint\scriptstyle\scriptscriptstyle{#1}}%
    {\XXint\scriptscriptstyle\scriptscriptstyle{#1}}%
    \!\int}
\def\XXint#1#2#3{{\setbox0=\hbox{$#1{#2#3}{\int}$ }
        \vcenter{\hbox{$#2#3$ }}\kern-.6\wd0}}
\def\dashint{\Xint-}
\theoremstyle{plain} 
\newtheorem{thm}{Theorem}[section]
\newtheorem{cor}[thm]{Corollary}
\newtheorem{lem}[thm]{Lemma}
\newtheorem{prop}[thm]{Proposition}
\newtheorem{defn}[thm]{Definition}
\theoremstyle{remark}
\newtheorem{rem}[thm]{Remark}
\def\Ld{\Lambda}
\numberwithin{equation}{section}
\def\f{\frac}
\def\vi{\varphi}
\def\Bl{\Bigl}
\def\Br{\Bigr}
 \def\Ga{\Gamma}
\def\ta{\theta}
\def\al{{\alpha}}
\def\be{{\beta}}
\def\da{{\delta}}
  \def\ga{{\gamma}}
 \def\k{{\kappa}}
 \def\va{\varepsilon}
 \def\CC{{\mathbb C}}
 \def\NN{{\mathbb N}}
 \def\RR{{\mathbb R}}
 \def\osc{\operatorname{osc}}
  \def\sspan{\operatorname{span}}
  \def\dim{\operatorname{dim}}
   \def\det{\operatorname{det}}
  \def\sph{\mathbb{S}^{d-1}}
\def\Og{\Omega}
\def\al{\alpha}
\newcommand{\wt}{\widetilde}
\def\sub{\substack}
\def\p{\partial}
\def\bl{\bigl}
\def\br{\bigr}
\def\og{\omega}
\def\Ld{\Lambda}
\newcommand{\R}{{\mathbb{R}}}
\newcommand{\eps}{\varepsilon}
\newcommand{\diam}{{\rm diam}}
\newcommand{\dist}{{\rm dist}}
\begin{document}

\title[]{On Bernstein- and Marcinkiewicz-type inequalities on multivariate $C^\alpha$-domains}
\author{Feng Dai}
\address{Department of Mathematical and Statistical Sciences,
    University of Alberta, Edmonton, Alberta T6G 2G1, Canada}
\email{fdai@ualberta.ca}

\author{Andr\'as Kro\'o}
\address{Alfr\'ed R\'enyi Institute of Mathematics,
     and Department of Analysis, Budapest University of Technology and Economics,
    Budapest, Hungary}
\email{kroo@renyi.hu}

\author{Andriy Prymak}
\address{Department of Mathematics, University of Manitoba, Winnipeg, MB, R3T2N2, Canada}
\email{prymak@gmail.com}

\thanks{    The first author was supported by  NSERC of Canada Discovery
    grant RGPIN-2020-03909, the second author was supported by the NKFIH - OTKA Grant K128922, and the third author  was supported by NSERC of Canada Discovery grant RGPIN-2020-05357.
}


\keywords{Bernstein type inequality, Marcinkiewicz type inequalities, sampling  discretization, $C^\alpha$-domains, multivariate polynomials}
\subjclass[2010]{41A17, 41A10,  42C05, 46N10, 42B99}

\begin{abstract}
 We prove new Bernstein and Markov type inequalities in $L^p$ spaces associated with the normal and the tangential derivatives on the boundary of a general compact $C^\al$-domain with $1\leq \al\leq 2$. These estimates are also applied  to establish  Marcinkiewicz type inequalities  for discretization of $L^p$ norms of algebraic polynomials on $C^\al$-domains with asymptotically optimal  number of function samples used.

\end{abstract}

\maketitle

\section{Introduction and main results}

The classical Bernstein and Markov inequalities for univariate algebraic polynomials $f$ of degree $n$ give the following sharp upper bounds for their derivatives:
$$\|\sqrt{1-x^2}f'(x)\|_{C[-1,1]}\leq n\|f\|_{C[-1,1]}, \;\;\;\|f'\|_{C[-1,1]}\leq n^2\|f\|_{C[-1,1]}.$$
The above estimates were extended to the $d$ dimensional unit ball $B^d:=\{x\in \RR^d: |x|\leq 1\}$ by Sarantopoulos \cite{s} (Bernstein type estimate) and Kellogg \cite{ke} (Markov type estimate). Namely, it was shown therein that
\begin{equation}\label{c}
\|\sqrt{1-|x|^2}D f(x)\|_{C(B^d)}\leq n\|f\|_{C(B^d)}, \;\;\;\|Df\|_{C(B^d)}\leq n^2\|f\|_{C(B^d)},
\end{equation}
where $|x|=(\sum_{1\leq j\leq d}x_j^2)^{1/2}$ and $Df:=|\nabla f|=(\sum_{j=1}^d(\frac{\partial f}{\partial x_j})^2)^{1/2}$ is the $\ell_2$ norm of the gradient $\nabla f$ of $f\in \Pi^d_n$. Here $\Pi^d_n$ stands for the space of real algebraic polynomials of $d$ variables and total degree at most $n$. These inequalities have been instrumental
in proving various results in approximation theory, and, as a
result, they have been generalized and improved in many directions, in particular for various norms and multivariate domains. A crucial feature of the Bernstein-Markov estimates \eqref{c} lies  in the fact that the magnitude of derivatives is of order $n^2$ uniformly on the domain, and of order $n$ pointwise if a proper distance from the boundary of the domain is enforced. In the multivariate setting this phenomenon  becomes more intriguing in the sense that both the order $n^2$ and  the ``proper distance from the boundary" are affected by the
smoothness of the domain. In addition, the study of \emph{tangential} Bernstein-Markov inequalities is particularly important, since they play a significant role in the theory
of the so called \emph{optimal meshes} (see the corresponding definitions in Section 7 below), and the Marcinkiewicz-Zygmund type inequalities. In this respect let us mention that on
$C^\al$ domains $D$, $ 1\leq\al\leq 2$, the magnitude of tangential derivatives of  $p\in \Pi^d_n$ was shown to be of order $n^{\frac{2}{\al}}$ uniformly on the domain, see \cite{Kroo}.
In addition, the tangential derivatives at a given point $x\in D$ are of order $n$ if they are weighted by the quantity $d(x)^{\f 1 \al-\f12}$ with $d(x)$ being the distance from $x$ to the boundary of the domain (\cite{Kroo1}).

In a recent paper \cite{DaiPry} the authors proved a new tangential Bernstein type inequality in $L^p$ spaces for algebraic polynomials on a general
compact $C^2$ domain. As an important application of this inequality, a Marcinkiewicz type inequality with asymptotically optimal number of sample points
for discretization of $L^p$ norms of algebraic polynomials on $C^2$ domains was given. The main goal of the present paper is to extend the results of \cite{DaiPry} to $C^\al$ domains with $1\leq\al<2$ (see Definition \ref{def-C2} for the precise definition of $C^\al$-domains). Similar to the case of uniform norm for $C^\al$ domains, $ 1\leq\al<2$, both the order of the tangential derivatives in $L^p$ Markov type estimates and the measure of distance from the boundary of the domain in tangential $L^p$ Bernstein type upper bounds are affected by the quantity $\al$ characterizing the smoothness of the domain. 

In this introduction,  we shall describe  our main results and some basic  notations. Necessary details and
appropriate definitions will be   given  in later part of the paper.  Given a  nonempty set $E\subset \RR^d$,we
denote by  $\dist(\xi, E)$ the distance from a point $\xi\in\RR^d$ to a set  $E\subset \RR^d$; that is,  $\dist(\xi, E):=\inf_{\eta\in E}\|\xi-\eta\|$, (we define   $\dist(\xi, E)=1$ if   $E=\emptyset$), where $\|\cdot\|$ denotes the Euclidean norm. 
Let $\Pi^d_n$ denote  the space of all real  algebraic polynomials in $d$ variables  of total degree at most $n$. Given $\xi\in\RR^d$, we denote by  $\p_\xi f=(\xi\cdot\nabla) f$ the directional derivative of $f$ along the direction of $\xi$. Throughout this paper,  the letter  $c$  denotes a generic constant whose value may
change from line to line, and the notation  $A \sim B$ means that  there exists a constant $c>0$, called the constant of equivalence, such that  $c^{-1} B\leq A \leq c B$.

Let $\Og\subset \RR^d$ be  a compact $C^\al$-domain with $1\leq \al\leq 2$ and a nonempty boundary $\Ga=\p\Og$. 
Denote by $\mathbf{n}_\eta$  the  outer unit normal vector   to  $\Ga$ at  $\eta\in\Ga$.
For  $ \xi\in\Og$,  $ f\in C^\infty(\Og)$,
we define
$$ |\nabla_{\tan, \eta} f(\xi)|:= \max\Bl\{
\bl|  \p_{\pmb{\tau}} f(\xi)\br|:\   \   \pmb\tau\in\sph,\  \ \pmb\tau\cdot \mathbf n_\eta=0\Br\},\  \ \eta\in\Ga, $$
and 
\begin{equation}\label{eqn:D max def}
	\mathcal{D}_{n,\mu} f(\xi):=\max\Bl\{  |\nabla_{\tan, \eta} f(\xi)|:\   \ \eta\in\p \Og,\   \  \|\eta-\xi\|\leq \mu \vi_{n,\Ga}(\xi)^{2/\al}\Br\},\   \ \xi\in\Og,
\end{equation}
where $\mu\ge 1$ is a parameter, and 
\begin{equation*}
	\vi_{n,\Ga}(\xi):=\sqrt{\dist(\xi, \Ga)} +n^{-1},\   \ n=1,2,\dots, \xi\in\Og.
\end{equation*}

In this paper, we will prove the following generalization of the $L^p$ tangential Bernstein-Markov type inequality from $C^2$ to $C^\al$ domains:

\begin{thm}\label{thm-1-1A} 
	For    $0<p<\infty$,  any  $f\in\Pi_n^d$ and $\mu>1$,    we have
	$$  \Bl\| (\vi_{n,\Ga} )^{\f 2\al-1} \mathcal{D}_{n,\mu} f\Br\|_{L^p(\Og)} \leq C(\mu,\Og, p)  n  \|f\|_{L^p(\Og)},$$
	and 
	   \begin{equation}\label{1-3A} \Bl\|  \mathcal{D}_{n,\mu} f\Br\|_{L^p(\Og)} \leq C(\mu,\Og, p)  n^{\frac{2}{\al}}  \|f\|_{L^p(\Og)}.\end{equation}
\end{thm}

Such a generalization  requires first a weighted analogue of the $C^2$ result with an explicit dependence on the size of the second derivatives. This is accomplished in Section 2 for certain graph domains in $\RR^2$. In Section 3 we make a transition from the $C^2$ to $C^\al, 1\leq\al<2$ case using approximation by the Steklov transform (also known as Steklov function, see, e.g., \cite{Ac}*{Sections~67, 83}). Then in Section 4 we proceed by considering higher dimensional graph domains.
 Since general  $C^{\alpha}$ domains can be covered  by $C^{\alpha}$ graph  domains (referred to  $C^\al$-domains of special type in this paper), results proved in Section 4 allow us to establish Theorem \ref{thm-1-1A}, 
 the tangential  Bernstein type inequality, for   a  more general $C^\al$-domain with $1\leq \al\leq 2$. This is done in Section 5.

 As mentioned above the parameter $\al$ affects upper bounds in $L^p$ tangential Bernstein-Markov type inequality, so this naturally leads to the question of sharpness of Theorem \ref{thm-1-1A}. In Section 6, we provide an example of a $C^\al$ domain 
 $\Og$ for which the asymptotic upper bound in \eqref{1-3A} cannot be improved.

 Finally,   as an application, we show  
 in Sections 7 ($d=2$) and 8 ($d>2$) that, similar to \cite{DaiPry} our results yield  a Marcinkiewicz type inequality with an asymptotically optimal number of sample points  for discretizing  $L^p$ norms of algebraic polynomials on $C^\al$ domains in $\RR^d$ with $2-\frac{2}{d}<\al\leq 2$.  More precisely, we will prove 
the following Marcinkiewicz type inequalities:

\begin{thm} \label{thm-1-2A}If  $2-\frac{2}{d}<\alpha\leq 2$, then  for any positive integer $n$, there exists a partition $\Og=\cup_{1\leq j\leq N}\Og_j$  of $\Og$ with $N\leq c_\al n^d$ such  that for every $\xi_j\in \Og_j$, each  $f\in  \Pi^d_n$ and $d-1< p<\infty$,  we have
	$$\frac{1}{2}\sum_{j=1}^N |\Og_j||f(\xi_j)|^p\leq \int_{\Og}|f(\xi)|^pd\xi\leq 2\sum_{j=1}^N|\Og_j||f(\xi_j)|^p.$$
\end{thm}
 
 Of particular interest is the case of $d=2$, where Theorem \ref{thm-1-2A} provides Marcinkiewicz type inequalities for $L^p$ norms on a compact $C^\al$-domain for all $1<\al\leq 2$ and $1<p<\infty$.  Theorem~\ref{thm-1-2A} is proven in Section~7 for $d=2$ and in Section~8 for $d\ge 3$.

\section{Tangential Bernstein's inequality on $C^2$ domains of special type}

In this section we will verify a tangential Bernstein type inequality on certain $C^2$ graph domains which will be the starting point of our further considerations. The proof follows closely~\cite{DaiPry}*{Sect.~4.2}. We repeat all the steps here for completeness because we need to track the dependence of the constants involved on the geometry of the domain. In addition, we need to introduce a certain weight into the $L^p$ norm which will be crucial below. This weighted approach will require applications of the so called doubling weights. Recall that a non-negative measurable function $w:I\to\R$ on an interval $I\subset \RR$ is a doubling weight  with  constant $\beta$ if for any interval $J\subset I$
\[
\int_{2J\cap I}w(x)\,dx\le \beta \int_{J} w(x)\,dx,
\]
where $2J$ denotes the interval with the length twice that of $J$ and the same midpoint as $J$, see, e.g.~\cite{MT2}*{Sect.~2}. We will need the following one-dimensional $L^p$ Bernstein inequality with doubling weights which can be found in~\cite{MT2}*{Th.~7.3} or~\cite{Er}*{Th.~3.1}. If $w$ is a doubling weight on $[-1,1]$ with the constant $\beta$, then for any $0< p<\infty$, and any algebraic polynomial $f\in\Pi_n^1$
\begin{equation}\label{eqn:1d-weighted-bern}
    \int_{-1}^1 (1-x^2)^{p/2}|f'(x)|^pw(x)\,dx
    \le C n^p \int_{-1}^1|f(x)|^pw(x)\,dx,
\end{equation}
where $C$ depends only on $p$ and $\beta$.

Let $g\in C^2[-1,2]$, and $M:=\|g''\|_{C[-1,2]}+18$. Define
$$
G:=\{(x,y):\  \ 0\leq x\leq 1,\   \ 0\leq  g(x)-y\leq 1\},$$
and
$$
G_\ast:=\{(x,y):\  \ -1\leq x\leq 2, \   \  g(x)-4\leq y \leq g(x)\}.
$$
For $(x,y)\in G_*$, we set  $\tau_x:= (1, g'(x))\in \RR^2$, and $\da(x,y):=g(x)-y$. Furthermore, let us denote by $\p_u$ the operator $u\cdot \nabla$  of  directional  derivative  for each  $u\in \RR^d$.

\begin{thm}\label{thm-1-1:Bern}
For   any $f\in\Pi_n^2$,  $0<p<\infty$ and  $0<\va\leq 1$,
    \begin{align*}
&   \Bl( \iint_G | \p_{\tau_x} f(x,y)|^p (\va+\da(x,y))^{-\f12}\, dydx \Br)^{1/p}\\
    &\leq C_p\sqrt{M} n  \Bl( \iint_{G_*} |f(x, y)|^p ( \va+\da(x,y))^{-\f12}\, dy dx\Br)^{1/p},
    \end{align*}
    where      $C_p>0$ is a constant depending only on $p$.
\end{thm}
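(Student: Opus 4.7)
\emph{Strategy.} We adapt \cite{DaiPry}*{Sect.~4.2}, tracking the dependence on $M=\|g''\|_{C[-1,2]}+18$. The key scale is intervals of length $\asymp 1/\sqrt{M}$, on which the graph is well approximated by its tangent. Partition $[-1,2]$ into subintervals $I_k=[x_k-h,x_k+h]$ with $h\asymp M^{-1/2}$. By Taylor's theorem, $|g(x)-L_k(x)|\le Mh^2/2\le 1/8$ and $|g'(x)-g'(x_k)|\le Mh\le c\sqrt M$ on $2I_k$, where $L_k(x):=g(x_k)+g'(x_k)(x-x_k)$.

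\emph{Tilted coordinates and main Bernstein step.} Introduce the affine map $\Phi_k(s,\tau):=(x_k+s,g(x_k)+sg'(x_k)-\tau)$, which has unit Jacobian, and set $\wt f_k(s,\tau):=f(\Phi_k(s,\tau))\in\Pi_n^2$. A direct computation gives
\[
\partial_s\wt f_k=(\partial_{\tau_{x_k}}f)\circ\Phi_k,\qquad \da(\Phi_k(s,\tau))=\tau+e_k(s),\ |e_k(s)|\le Ms^2/2,
\]
so the weight transforms to $w_k(s,\tau):=(\va+\tau+e_k(s))^{-1/2}$. A key technical observation is that, for each fixed $\tau\ge 0$, the map $s\mapsto w_k(s,\tau)$ is a doubling weight on $[-h,h]$ with constant uniform in $\va,\tau,k,M$; this uses the quadratic control on $e_k$ and the fact that singularities of the form $|s-s_0|^{-1/2}$ are themselves doubling weights. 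The rescaled one-dimensional weighted Bernstein inequality~\eqref{eqn:1d-weighted-bern} (applied to $\wt f_k(\cdot,\tau)\in\Pi_n^1$ on $[-h,h]$) then yields
\[
\int_{-h/2}^{h/2}|\partial_s\wt f_k|^p w_k\,ds\le C_p(n\sqrt M)^p\int_{-h}^{h}|\wt f_k|^p w_k\,ds,
\]
and integrating in $\tau\in[0,4]$, returning to $(x,y)$-coordinates, and summing over $k$ (using that the $\Phi_k(2I_k\times[0,4])$ cover the RHS region with bounded multiplicity) handles the contribution from $\partial_{\tau_{x_k}}f$.

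\emph{Error from direction variation, and main obstacle.} Since $\partial_{\tau_x}f=\partial_s\wt f_k+(g'(x)-g'(x_k))\partial_y f$ and $|g'(x)-g'(x_k)|\le c\sqrt M$ on $I_k$, the residual term $(g'-g'_k)\partial_y f$ must be absorbed. This is carried out by a further application of~\eqref{eqn:1d-weighted-bern} in the $\tau$-direction, valid because $\wt f_k(s,\cdot)\in\Pi_n^1$ and because $\tau\mapsto w_k(s,\tau)$ is also a doubling weight on $[0,4]$; combined with the factor $\sqrt M$ from $|g'-g'_k|$, it produces an additional contribution of the same order $M^{p/2}n^p$. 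The hard part is precisely this last step: one must verify the uniform doubling property of the induced weight $w_k$ in both variables and carry out the error estimate carefully, using the specific structure $(\va+\tau+e_k(s))^{-1/2}$ of the weight to avoid picking up a spurious factor of $n$ or $\sqrt M$ that would break the target bound.
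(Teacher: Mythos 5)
Your outline---one-dimensional weighted Bernstein in two directions after a change of variables adapted to the boundary, with scales of order $M^{-1/2}$---is the right framework, but the proposal substitutes the tangent line for the paper's key quadratic change of variables, and this is not a harmless simplification: it introduces two genuine gaps.

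The paper does not partition $[-1,2]$ and tilt by tangent lines; it foliates the strip by downward-opening parabolas via
\[
\Phi(z,t)=(z+t,\,Q_z(t)),\qquad Q_z(t)=g(z)+g'(z)t-\tfrac{A}{2}t^2,\quad A\in[\tfrac{5M}{2},3M].
\]
The choice $A>\|g''\|_\infty$ forces
\[
g(z+t)-Q_z(t)=\int_z^{z+t}\bigl(g''(u)+A\bigr)(z+t-u)\,du\;\sim\;Mt^2>0,
\]
so each fiber lies strictly inside the domain and the depth $\da$ is two-sidedly comparable to $Mt^2$.

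This resolves your doubling/positivity obstacle. In your tilted coordinates the weight is $w_k(s,\tau)=(\va+\tau+e_k(s))^{-1/2}$, and for small $\va$ and $\tau$ one can have $\va+\tau+e_k(s)<0$ for $|s|$ near $h\sim M^{-1/2}$ (take $g$ concave near $x_k$, so $e_k(s)\le 0$). Thus the weight is not even defined on the fixed interval $[-h,h]$, and the uniform doubling claim you mark as ``the key technical observation'' and ``the hard part'' is not something one can just verify carefully; it has to be manufactured. In the paper's coordinates, $v_\va(\Phi(z,t))\sim(\va+Mt^2)^{-1/2}$ depends only on $t$, and together with the Jacobian $(A+g''(z))|t|\sim M|t|$ the effective weight in $t$ is $M|t|(\va+Mt^2)^{-1/2}$, whose doubling constant is independent of $M$ and $\va$ by a one-line computation.

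It also resolves the residual estimate, which as you set it up cannot close. You have $|g'(x)-g'(x_k)|\le M|s|\le c\sqrt M$, a bound that is only constant on the strip; then $\sqrt M\,|\p_y f|$ can only be absorbed via the Markov form of the one-dimensional inequality (no endpoint factor on the left), which costs $n^2$ and gives $M^{p/2}n^{2p}$, not the target $M^{p/2}n^p$. The paper's parabola supplies the missing endpoint factor for free: from $\da\ge cMt^2$ one gets $|w(z,t)|=|g'(z+t)-g'(z)+At|\le CM|t|\le C\sqrt{M}\,\sqrt{g(x)-y}$, and the factor $(g(x)-y)^{p/2}$ is exactly the weight $(1-y^2)^{p/2}$ that \eqref{eqn:1d-weighted-bern} requires. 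With the tangent line nothing forces $Ms^2\lesssim\da$, so no analogous bound holds.

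In short, the decisive idea you are missing is the replacement of the tangent line by a parabola whose curvature strictly dominates $\|g''\|_\infty$. That single device makes the transformed weight well-defined and cleanly doubling, and produces the $\sqrt\da$ factor needed by the residual. Everything else in your plan would then go through essentially as in the paper.
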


For the proof of Theorem \ref{thm-1-1:Bern}, we need to introduce some necessary notations. 
    Set  $r_0:= \sqrt{2/M}$, and  define
     $$E:=\Bl\{(z,t)\in\RR^2:\   \  z, z+t\in [-1,2],\   \   |t|\leq r_0\Br \}.$$
     We also   write
     $E=E^{+}\cup E^{-}$,
     where  $$E^{+} =\{(z,t)\in E:\  \ t\ge 0\}\   \   \  \text{and}\   \  E^{-} =\{(z,t)\in E:\ \ t\leq 0\}.$$
    For each fixed $z\in [-1,2]$,   define
     $$ Q_z(t):=g(z)+g'(z) t -\f A2t^2,\   \   \  t\in\RR,$$ where $A$ is a fixed constant between $5M/2$ and $3M$.
     By   Taylor's theorem,  we have that
     \begin{equation}\label{eqn:taylor}
     g(z+t)-Q_z(t) =\int_z^{z+t} [ A +g''(u)] (z+t-u)\, du, \  \  (z, t)\in E.
     \end{equation}

    Next,  we  define
 the differentiable mapping $\Phi: E\to \RR^2$ by
\begin{equation}\label{1-2:Phi}
\Phi(z, t)=(x,y): =\big(z+t, Q_z(t)\big),\   \     (z,t) \in E.
\end{equation}
 Denote by $\Phi_+$ and $\Phi_{-}$ the restrictions of $\Phi$ on the sets $E^+$ and $E^{-}$ respectively.

The following lemma summarizes some useful properties of the mapping $\Phi$, which will play a crucial role in the proof of Theorem ~\ref{thm-1-1:Bern}:

\begin{lem}\label{lem-1-1:Berns}
    \begin{enumerate}[\rm (i)]
        \item For each $(z,t)\in E$, we have that  $\Phi(z,t)\in G_\ast$ and
        \begin{equation}\label{1-3:Jacobian}\Bl| \det  \bl(J_\Phi(z,t)\br)\Br|\equiv \f {\p(x,y)}{\p(z,t)}= \bl(A+g''(z)\br) |t|,\end{equation}
        where $J_\Phi$ denotes the Jacobian matrix of the mapping $\Phi$.
        \item  Both the mappings $\Phi_+: E^{+}\to \RR^2$ and $\Phi_{-} : E^{-} \to \RR^2$ are  injective.
        \item  For each $(x,y)\in G$, there exists a unique $(z,t)\in E^{+}$ such that $0\leq t\leq r_1:=2/\sqrt{3M}$ and $\Phi(z,t)=(x,y)$.
    \end{enumerate}
\end{lem}
\begin{proof} (i) Let $(x,y) =\Phi(z,t) $ with $(z, t) \in E$.  Then $x=z+t\in [-1,2]$ and by \eqref{eqn:taylor},  we have
    \begin{equation}\label{1-4:Taylor} g(x)-y=g(z+t)-Q_z(t) =\int_z^{z+t} (g''(u)+A) (z+t-u)\, du,\end{equation}
    which implies that $g(x)\ge  y$.
    Since $|t|\leq \sqrt{2/M}$, we deduce  from \eqref{1-4:Taylor}  that
    $ g(x)-y\leq 2M t^2 \leq 4$, which  implies  that  $(x, y)\in G_\ast$.
    Finally,  we point out that   \eqref{1-3:Jacobian}  follows directly  from  \eqref{1-2:Phi} and straightforward calculations.

    (ii)  
     Assume that $\Phi(z_1, t_1) =\Phi(z_2, t_2)$ for some  $(z_1, t_1), (z_2, t_2) \in E$ with $t_1t_2\ge 0$ and $t_2\ge t_1$.  Then $z_1+t_1=z_2+t_2$ and $Q_{z_1}(t_1)=Q_{z_2} (t_2)$. We will show that $t_1=t_2$, which will in turn imply  $z_1=z_2$.
    Setting $\bar{x}=z_1+t_1$, we obtain from \eqref{1-4:Taylor}  that for $i=1,2$,
    \begin{align*}
    g(\bar{x}) -Q_{z_i}(t_i) =g(z_i+t_i)-Q_{z_i}(t_i) =\int_0^{t_i} [ g''(\bar{x}-v) +A] v \, dv,
    \end{align*}
    which implies that
    \begin{equation}\label{1-5:inj} \int_{t_1}^{t_2} \Bl( g''(\bar{x} -v)+A\Br) v\, dv =0.\end{equation}
    Since $g''(\bar{x} -v)+A\ge \f A2>0$ and $v$ doesn't change sign on the interval $[t_1, t_2]$,  \eqref{1-5:inj} implies that  $t_1=t_2$.   This proves that both $\Phi_+$ and $\Phi_{-}$ are  injective.

    (iii)    Given  $(x,y)\in G$, we define the function  $h: [0, r_1]\to \RR$ by
    $ h(t):= g(x) -Q_{x-t} (t)$.  Note that   the function $h$ is well defined on $[0, r_1]$   since $0\leq x\leq 1$ and   $-1<-r_1\leq x-t \leq 1$ for any $t\in [0, r_1]$.  Moreover, using \eqref{eqn:taylor}  with $z=x-t$, we have
    $$ h(t)=\int_{x-t} ^x (g''(u) +A) (x-u)\, du.$$
    Thus, $h$ is a continuous function on $[0, r_1]$ satisfying that $h(0)=0$ and
    $$ h(r_1) \ge \f 32 M  \int_0^{r_1} v\, dv =\f 34M r_1^2= 1.$$
    Since $0\leq g(x)-y\leq 1$, it follows  by the intermediate value theorem  that there exists $t_0\in [0, r_1]$ such that
    $h(t_0) =g(x)-y$. This implies that  $y=Q_{x-t_0} (t_0)$ and $(x,y) =\Phi(x-t_0, t_0)$.
        \end{proof}

\begin{proof}[Proof of Theorem ~\ref{thm-1-1:Bern}]
    Let $v_\va(x,y):= ( \va+\da(x,y))^{-\f12}$ for $(x,y)\in G_\ast$.
     Performing the change of variables $x=z+t$ and $y=Q_z(t)$, and using Lemma ~\ref{lem-1-1:Berns},  we obtain
        \begin{align*}
    I&:=    \Bl(\iint_{G} \Bl|\p_{\tau_x} f(x,y)|^p\, v_\va(x,y) dxdy\Br)^{1/p}\\
    & =\Bl(\iint_{\Phi_+^{-1} (G)} \Bl| \p_{\tau_{z+t}} f(z+t, Q_z(t))\Br|^p v_\va(z+t,Q_z(t))(A +g''(z)) t \, dzdt\Br)^{1/p}.
        \end{align*}
        A straightforward calculation shows that
        \begin{align*}
     \p_{\tau_{z+t}} f(z+t, Q_z (t)) = \f d{dt} \Bl[ f(z+t, Q_z(t))\Br] + w(z,t) \p_2 f(z+t, Q_z(t)),
        \end{align*}
        where
        $ w(z,t) =g'(z+t) -g'(z) +A t$.
    It then follows that  $I\leq C_p (I_1+I_2)$, where
    \begin{align}
        I_1&:=\Bl(\iint_{\Phi_+^{-1} (G)} \Bl| \f d {dt} \Bl[ f(z+t, Q_z(t))\Br]\Br|^pv_\va(z+t,Q_z(t)) (A +g''(z)) t\, dzdt\Br)^{1/p},\label{I1:def}\\
        I_2&:=  \Bl(\iint_{\Phi_+^{-1} (G)} |w(z,t)|^p |\p_2 f(z+t, Q_z(t))|^p v_\va(z+t,Q_z(t))(A +g''(z)) t \, dzdt\Br)^{1/p}.\notag
    \end{align}

    To  estimate the double integral $I_2$, we perform  the change of variables $x=z+t$ and $y=Q_z(t)$ once again to obtain that
    \begin{align}\label{1-7:Bern}
    I_2& =\Bl(\iint_G |u(x,y)|^p |\p_2 f(x,y)|^p v_\va(x,y)\, dxdy\Br)^{1/p},
    \end{align}
    where $u$ is a function on $\Phi(E^{+})$   given by
    $$ u\circ \Phi(z,t) =w(z,t),\    \    \   (z,t) \in E^{+}.$$
Note that according to Lemma \ref{lem-1-1:Berns} (iii), the function $u$ is well defined on $G$.
    We further  claim that
    \begin{equation}\label{1-6:Bern}
    |u(x,y)|\leq C \sqrt{M} \sqrt{g(x)-y},\   \    \  \forall (x,y)\in G.
    \end{equation}
    Indeed,  using  Lemma \ref{lem-1-1:Berns}  (ii), we may  write $x=z+t$ and $y=Q_z(t)$ with $(z,t) \in E^{+}$. We then have that
    $$|u(x,y)|=|w(z,t)|=|g'(z+t)-g'(z) +At |\leq C M |t|.$$
    On the other hand, using \eqref{eqn:taylor}, we have that
    \begin{equation}\label{1-9}
    g(x) -y =g(z+t) -Q_z(t) =\int_z^{z+t} ( g''(u) +A) (z+t-u) \, du \in \ge C M |t|^2.
    \end{equation}
    Combining these last two estimates, we deduce  the claim \eqref{1-6:Bern}.

    Now using \eqref{1-7:Bern} and \eqref{1-6:Bern},
    we obtain
    \begin{align*}
    I_2&\leq  C\sqrt{M}\Bl(\int_0^1 \Bl[ \int_{g(x)-1}^{g(x)} (\sqrt{g(x)-y})^p |\p_2 f(x,y)|^pv_\va(x,y)\, dy\Br]\, dx \Br)^{1/p}.
    \end{align*}
Note that for each fixed $x\in [0, 1]$, the function $$y\mapsto v_\va(x, y)=(g(x)-y+\va)^{-\f12}$$ is a doubling weight on the interval $[g(x)-4, g(x)]$ with a doubling constant independent of $\va\in (0, 1)$. Thus,  applying~\eqref{eqn:1d-weighted-bern} w.r.t. $y$ on the interval $[g(x)-4,g(x)]$, we have that
    \begin{align*}
    I_2&\leq C\sqrt{M} n\Bl(\int_0^1 \int_{g(x)-4}^{g(x)} |f(x,y)|^p v_\va(x,y)\, dydx\Br)^{1/p}.
    \end{align*}

    It remains to  estimate the integral $I_1$.
     Recall that $r_0=\sqrt{2/M}>r_1=\sqrt{4/(3M)}$.  Since $r_0\in (0, \f13]$, it's easily seen from  Lemma ~\ref{lem-1-1:Berns} (iii)  that
    \begin{equation}
    \Phi_{+}^{-1} (G) \subset [-r_1,1]\times [0, r_1]\subset [-r_0, 1] \times [-r_0, r_0] \subset E.
    \end{equation}
    By the equality from~\eqref{1-9} and the choice of $A$, we also have
    \begin{equation*}
     (\va+2M t^2)^{-\f12}\leq v_\va(x, y) \leq  \Bigl(\va+\f 34M t^2\Bigr)^{-\f12},
    \end{equation*}
and both sides are equivalent up to a constant factor to $(\va+ M t^2)^{-\f12}$.
        Thus, using \eqref{I1:def},     we  obtain
        \begin{align*}
        I_1     &\leq C M^{1/p} \Bl( \int_{-r_1 }^{1} \Bl[ \int_{0}^{r_1}\Bl| \f d{dt} \Bl[ f(z+t, Q_z(t))\Br] \Br|^p  (\va+ M t^2)^{-\f12} |t|\, dt\Br]\, dz\Br)^{1/p}.
    \end{align*}
A straightforward calculation shows that for any $\va'>0$,   $|t|(\va'+ t^2)^{-\f12}$ is a doubling weight on $[-1,1]$ with a  doubling constant independent of $\va'$,
which also implies that the doubling constant of $|t|(\va+ M t^2)^{-\f12}$ is independent of both $M$ and $\va>0$.
 Now using~\eqref{eqn:1d-weighted-bern} w.r.t. $t$ on the interval $[-r_0,r_0]$, we obtain
    \begin{align*}  I_1 &\leq CM^{1/p} n r_0^{-\f12} (r_0-r_1)^{-\f12}\Bl( \int_{-r_0 }^{1} \Bl[ \int_{-r_0}^{r_0}\Bl| f(z+t, Q_z(t)) \Br|^p |t|(\va+ M t^2)^{-\f12}\, dt\Br]\, dz\Br)^{1/p}\\
    &   \leq CM^{\f 12 +\f1p}  n \Bl(\iint_{E} |f(z+t, Q_z(t))|^p|t|(\va+ M t^2)^{-\f12}\, dzdt\Br)^{1/p}. \end{align*}
        Splitting this last double integral into two parts $\iint_{E^{+}}+\iint_{E^{-}}$, and applying  the change of variables $x=z+t$ and $y=Q_z(t)$ to each of them  separately, we obtain that
    \begin{align*}
I_1&    \leq   C_pM^{1/2}n \Bl(\iint_{E^{+}\cup E^{-}} |f(z+t, Q_z(t))|^p(A +g''(z)) |t| v_\va(z+t, Q_z(t))\, dzdt\Br)^{1/p} \\
        &\leq C_p M^{1/2} n \Bl[ \iint_{\Phi(E^{+})}  |f(x,y)|^p v_\va(x,y) dxdy+\iint_{\Phi(E^{-})}  |f(x,y)|^p v_\va(x,y) dxdy \Br]^{1/p}\\
        &\leq C_p M^{1/2} n  \Bl( \int_{-1}^2 \int_{g(x)-4}^{g(x)} |f(x, y)|^p \f 1 { \sqrt{\va+\da(x,y)}}\, dy dx\Br)^{1/p},
        \end{align*}
        where the last step uses Lemma  \ref{lem-1-1:Berns} (i).

        \end{proof}

    \section{Tangential Bernstein's inequality on $C^\al$ domains of special type}

          The next step of our construction is to proceed from the $C^2$ to $C^\al$ graph domains in $\RR^2$. This will be done using approximation of $C^\al$ functions by their Steklov transform.  Let   $g: [-4,4]\to \RR$  be  a $C^\al$-function  with constant $L>1$ for some $1\leq \al\leq 2$; that is,  $g\in C^1[-4, 4]$ and      $$|g'(x+t)-g'(x)|\leq L |t|^{\al-1}\   \   \text{ whenever $x, x+t \in [-4,4]$}.$$
    Define  \begin{align*}
    G:&=\{(x,y):\  \ 0\leq x\leq 1,\   \ g(x)-1\leq y \leq g(x)\},\\
    G_\ast:&=\{(x,y):\  \ -1\leq x\leq 2, \   \  g(x)-8\leq y \leq g(x)\},\\
    \tau_x:&=(1, g'(x))\   \   \ \text{and}\   \  \da_n(x,y)=\da(x,y)+\f 1{n^2}=g(x)-y +\f 1 {n^2},\   \   \   \   (x,y)\in G.
    \end{align*}
    Here we point out that the set $G_\ast$ defined here is different from the set $G^\ast$ introduced in the last section. 
    We will keep these assumptions and notations throughout this section. We will show that \cref{thm-1-1:Bern} implies the following:

    \begin{thm}\label{thm-2-2}
            Let  $\ga=\f 1 \al-\f12$.
        Then for any  $f\in\Pi_n^2$ and $0<p<\infty$,
        $$ \Bl(\iint_G  (\da_n(x,y))^{\ga p} | \p_{\tau_x} f(x,y)|^p\, dydx\Br)^{1/p} \leq C_p L^{\f 1 \al}  n \|f\|_{L^p(G_\ast)}.$$

    \end{thm}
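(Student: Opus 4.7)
The plan is to approximate the $C^\al$ function $g$ by its Steklov regularization $g_h$ (which is $C^2$) and then apply \cref{thm-1-1:Bern}. Fix a smooth nonnegative bump $\vi\in C_c^\infty(\RR)$ of mass one with support in $[-1,1]$, and for $h>0$ set $g_h:=g*\vi_h$ with $\vi_h(u):=h^{-1}\vi(u/h)$. Standard estimates for a Steklov regularization of a $C^\al$ function of constant $L$ yield
\[
\|g-g_h\|_\infty\leq C_0Lh^\al,\qquad \|g'-g_h'\|_\infty\leq C_0Lh^{\al-1},\qquad \|g_h''\|_\infty\leq C_0Lh^{\al-2},
\]
with an absolute constant $C_0$, so that $g_h\in C^2$ and \cref{thm-1-1:Bern} applies to its graph.

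A single Steklov scale with $\|g-g_h\|_\infty\lesssim 1/n^2$ forces $M_h:=\|g_h''\|_\infty+18\sim L^{2/\al}n^{2(2-\al)/\al}$ and produces only the suboptimal bound $L^{1/\al}n^{2/\al}$. To reach $L^{1/\al}n$, one matches the smoothing scale to the dyadic boundary distance: partition $G$ into strips $S_j:=\{(x,y)\in G:\,2^{-j-1}<\delta_n(x,y)\leq 2^{-j}\}$ for $j=0,1,\dots,J$ with $J:=\lceil 2\log_2 n\rceil$, and on $S_j$ use $h_j:=(2^{-j}/L)^{1/\al}$. Then $\|g-g_{h_j}\|_\infty\lesssim 2^{-j}$ and $M_j\lesssim L^{2/\al}2^{j(2-\al)/\al}$, giving $\sqrt{M_j}\,n\asymp L^{1/\al}2^{j\ga}n$. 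Since $\delta_n^\ga\asymp 2^{-j\ga}$ on $S_j$, the product $\delta_n^\ga\sqrt{M_j}\,n\asymp L^{1/\al}n$, confirming the correct scaling at every dyadic level.

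For each $j$, apply \cref{thm-1-1:Bern} to the $C^2$ graph of $g_{h_j}$ with $\va:=2^{-j}$ and restrict the LHS to $S_j$; since $(\va+g_{h_j}(x)-y)^{-1/2}\asymp 2^{j/2}$ on $S_j$, the resulting per-strip estimate reads
\[
\int_{S_j}\delta_n^{\ga p}|\p_{\tau_x^{(h_j)}}f|^p\,dy\,dx \leq CL^{p/\al}\,2^{-j/2}\,n^p\int_{G_\ast}|f|^p(2^{-j}+\delta(x,y))^{-1/2}\,dy\,dx,
\]
where $\tau_x^{(h_j)}:=(1,g_{h_j}'(x))$. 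The tangent-direction mismatch $\p_{\tau_x}f=\p_{\tau_x^{(h_j)}}f+(g'(x)-g_{h_j}'(x))\p_2 f$, with $|g'-g_{h_j}'|\leq C_0L^{1/\al}2^{-j(\al-1)/\al}$, contributes a normal-derivative piece which is controlled by the one-dimensional weighted Bernstein inequality~\eqref{eqn:1d-weighted-bern} applied to $f(x,\cdot)$ in $y$ with a doubling weight adapted to $S_j$. Summing over $j$ and bounding the resulting dyadic integral by $\int_{G_\ast}|f|^p\,dy\,dx$ then yields the conclusion.

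\textbf{Main obstacle.} The most delicate point is performing the summation over $j$ without incurring a logarithmic loss: the naive kernel $\sum_j 2^{-j/2}(2^{-j}+\delta)^{-1/2}$ only behaves like $\log(1/\delta)$, which when integrated against $|f|^p$ would produce a spurious $\log n$ factor. Avoiding this requires a \emph{localized} variant of \cref{thm-1-1:Bern} in which the right-hand side of each per-strip estimate is restricted to a bounded-overlap neighborhood of $S_j$ in $G_\ast$, rather than all of $G_\ast$. Such a localized version can be extracted by revisiting the proof of \cref{thm-1-1:Bern} and applying the one-dimensional doubling-weight Bernstein inequality in the tangential coordinate $t$ only on a sub-interval of $[-r_0,r_0]$ that corresponds to $S_j$ under the change of variables $\Phi$; coordinating this localization with the tangent-correction step is the main technical refinement needed beyond the Steklov approximation itself.
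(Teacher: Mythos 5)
Your overall plan — Steklov regularization of $g$, with the smoothing scale matched to the dyadic normal distance, followed by an application of \cref{thm-1-1:Bern} at each scale — is exactly the strategy the paper uses, and your scaling calculation ($\sqrt{M_j}\,n\,\da_n^\ga\asymp L^{1/\al}n$ on each strip) is correct. You also correctly diagnose the central difficulty: a naive per-strip application of \cref{thm-1-1:Bern} with $\va=2^{-j}$ produces the kernel $\sum_j 2^{-j/2}(2^{-j}+\da)^{-1/2}\sim\log(1/\da)$ when summed, which would cost a factor $(\log n)^{1/p}$.

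However, that diagnosis is also where the proposal stops being a proof: the ``localized variant of \cref{thm-1-1:Bern}'' that you identify as necessary is precisely the crux of the argument, and you leave it as an unjustified sketch. Moreover, the specific form you suggest — applying the one-dimensional inequality \eqref{eqn:1d-weighted-bern} in the tangential coordinate $t$ only on a sub-interval corresponding to $S_j$ under $\Phi$ — is problematic as stated, because \eqref{eqn:1d-weighted-bern} is a \emph{global} inequality on its interval: one cannot freely restrict its right-hand side to a subinterval without a further argument. The paper resolves the issue by a different device: it first proves an intermediate result (\cref{lem-1-2:Bern}) with a free parameter $b$, whose right-hand integration range is deliberately pushed \emph{past} the graph to $y\le g(x)+b/4$ and whose right-hand weight is $(\da+b)^{-1/2}$, and then applies this lemma not to $g$ but to the \emph{shifted} function $g-2^j/n^2$ with $b=2^{j+1}/n^2$. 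The shift is what effects the localization: after shifting, the $j$-th right-hand integral only sees the region $\da(x,y)\gtrsim 2^{j-1}/n^2$, so for each fixed $(x,y)$ only $j\le\log_2(2n^2\da(x,y))$ contribute, and the resulting sum $\sum_j(2^j/(n^2\da))^{1/2}$ is a geometric series dominated by its last term, with no logarithmic loss. (A Remez inequality is used at the very start to discard the layer $\da<2/n^2$.) Until you supply and prove a concrete localized lemma playing the role of \cref{lem-1-2:Bern} together with the shifting mechanism (or some equivalent), the proposal has a genuine gap at its most delicate point.
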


    The proof of  this theorem relies on  the following lemma:

    \begin{lem}\label{lem-1-2:Bern} For any  $0<b\leq 1$,  $0<p<\infty$, and   $f\in\Pi_n^2$,   we have
        \begin{align*}
    &   \Bl(\int_0^1 \Bl[\int_{g(x)-1} ^{g(x)} | \p_{\tau_x} f(x,y)|^p\, \f {dy}{\sqrt{\da(x,y)+b}}\Br]dx\Br)^{1/p}\\
    & \leq C(p,\al) L^{\f 1 \al}  b^{\f12-\f 1\al}n\Bl( \int_{-1}^2 \Bl[\int_{g(x)-6}^{g(x)+\f b4}|f(x,y)|^p\, \f {dy}{\sqrt{\da(x,y)+b}}\Br]\, dx\Br)^{1/p}.
        \end{align*}
    \end{lem}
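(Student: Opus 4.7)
The plan is to deduce the $C^\al$ estimate from the $C^2$ case (Theorem~\ref{thm-1-1:Bern}) applied to the Steklov-smoothed surrogate $g_h(x):=\f1h\int_{x-h/2}^{x+h/2}g(u)\,du$ with window $h:=\eta(b/L)^{1/\al}$, where $\eta\in(0,1)$ is a small absolute constant. Standard estimates from the $C^\al$ hypothesis give
$$\|g-g_h\|_\infty\le CLh^\al,\qquad \|g'-g_h'\|_\infty\le CLh^{\al-1},\qquad \|g_h''\|_\infty\le CLh^{\al-2},$$
so for $\eta$ sufficiently small, $\|g-g_h\|_\infty\le b/8$, $|g'-g_h'|\le CL^{1/\al}b^{1-1/\al}$, and the quantity $M_h:=\|g_h''\|_\infty+18$ satisfies $\sqrt{M_h}\le CL^{1/\al}b^{1/2-1/\al}$, which is exactly the factor appearing on the right-hand side of the lemma.

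Setting $\tau_x^{(h)}:=(1,g_h'(x))$, write $\p_{\tau_x}f=\p_{\tau_x^{(h)}}f+(g'(x)-g_h'(x))\p_2 f$; by the (quasi-)triangle inequality in $L^p$, the LHS of the lemma splits into a main piece $\mathcal I$ involving $\p_{\tau_x^{(h)}}f$ and a remainder piece $\mathcal R$ involving $(g'-g_h')\p_2 f$. For $\mathcal R$, pull out the pointwise bound $|g'-g_h'|\le CL^{1/\al}b^{1-1/\al}$ and apply~\eqref{eqn:1d-weighted-bern} to the polynomial $y\mapsto f(x,y)\in\Pi_n^1$ on a symmetric rescaling of $[g(x)-4,g(x)]$ with the doubling weight $(b+g(x)-y)^{-1/2}$; this produces the factor $n$, while the missing Bernstein vanishing factor is harmless since it is bounded below by a constant on the subinterval $[g(x)-1,g(x)]$ on which $\mathcal R$ is integrated. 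Because $b\le 1$ and $1-\tfrac1\al\ge\tfrac12-\tfrac1\al$, we have $L^{1/\al}b^{1-1/\al}\le L^{1/\al}b^{1/2-1/\al}$, so $\mathcal R$ is absorbed into the claimed bound. For $\mathcal I$, apply Theorem~\ref{thm-1-1:Bern} to $g_h$ with $\va:=b$; the bound $|g-g_h|\le b/8$ ensures that $(b+g_h(x)-y)\asymp (b+g(x)-y)$ on the regions of interest (the enlarged upper limit $g(x)+b/4$ is chosen so that this denominator stays bounded away from zero) and that $[g_h(x)-4,g_h(x)]\subset[g(x)-6,g(x)+b/4]$, giving $\mathcal I\le C\sqrt{M_h}\,n$ times the RHS of the lemma.

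The principal technical obstacle is the mismatch between the two $y$-domains $[g(x)-1,g(x)]$ and $[g_h(x)-1,g_h(x)]$ when feeding Theorem~\ref{thm-1-1:Bern} for $g_h$ into the estimate for $\mathcal I$: their symmetric difference is a thin strip of $y$-width at most $\|g-g_h\|_\infty\le b/8$, and this mismatch is precisely what forces the enlarged upper limit $g(x)+b/4$ on the right-hand side of the lemma. On this thin strip the weight $(b+g(x)-y)^{-1/2}$ stays bounded, so its contribution can be controlled by the same one-dimensional weighted Bernstein argument in $y$ that was used for $\mathcal R$, producing the same target bound $L^{1/\al}b^{1/2-1/\al}n$ against integrals lying inside $[-1,2]\times[g(x)-6,g(x)+b/4]$; alternatively, one may apply Theorem~\ref{thm-1-1:Bern} to the two shifted copies $g_h\pm b/4$ of $g_h$ (which leaves $M_h$ and $\tau_x^{(h)}$ unchanged) to cover the entire target interval in $\mathcal I$ directly.
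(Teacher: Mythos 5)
Your overall strategy — Steklov smoothing $g_h$, decomposition $\p_{\tau_x}f=\p_{\tau_x^{(h)}}f+(g'-g_h')\p_2 f$, then applying \cref{thm-1-1:Bern} to the smoothed surrogate and the one-dimensional weighted Bernstein inequality to the remainder — is the same as the paper's (the paper uses a double average in place of your single one, which is immaterial). However, there is a genuine error in your treatment of the remainder $\mathcal{R}$ that must be flagged.

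You claim that, after applying \eqref{eqn:1d-weighted-bern} on a rescaling of $[g(x)-4,g(x)]$, the Bernstein vanishing factor is ``harmless since it is bounded below by a constant on the subinterval $[g(x)-1,g(x)]$.'' That is false: rescaling $[g(x)-4,g(x)]$ to $[-1,1]$, the factor $(1-t^2)^{1/2}$ corresponds to $\bigl(\tfrac14(y-g(x)+4)(g(x)-y)\bigr)^{1/2}$, which vanishes as $y\to g(x)$; on $[g(x)-1,g(x)]$ it is comparable to $\sqrt{g(x)-y}$, which is \emph{not} bounded below. The weight $(g(x)-y+b)^{-1/2}$ does not rescue this because the $+b$ regularization keeps it bounded (by $b^{-1/2}$) near $y=g(x)$; the combined factor still tends to $0$. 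Moreover, one cannot push the right endpoint of the Bernstein interval past $g(x)+b$, since the weight $(g(x)-y+b)^{-1/2}$ ceases to be defined there. The correct fix (used in the paper) is to apply the weighted Bernstein inequality on an interval whose right endpoint is $g(x)+cb$ with a small constant $c$; then on $[g(x)-1,g(x)]$ the vanishing factor is bounded below by $c'\sqrt{b}$, and one incurs exactly a $b^{-1/2}$ loss. Consequently $\mathcal{R}\lesssim L^{1/\al}b^{1-1/\al}\cdot b^{-1/2}\cdot n\,\|f\|_{w} = L^{1/\al}b^{1/2-1/\al}n\,\|f\|_{w}$: this matches the claimed bound with no slack. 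Your appeal to $b^{1-1/\al}\le b^{1/2-1/\al}$ as providing ``absorption'' room is therefore an artifact of the missing $\sqrt{b}$ loss; once the loss is accounted for, the exponent $\tfrac12-\tfrac1\al$ is exact, not slack.

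Your handling of the domain mismatch for $\mathcal{I}$ via shifted copies $g_h\pm cb$ is sound and essentially equivalent to how the paper replaces the weight $(\da+b)^{-1/2}$ by $(g_\da-y+b)^{-1/2}$ and enlarges the $y$-domain to $[g_\da(x)-1-b/8,\,g_\da(x)+b/8]$ before invoking \cref{thm-1-1:Bern}. To repair the proof, redo the $\mathcal{R}$ estimate with the Bernstein interval $[g(x)-2,\,g(x)+b/8]$ (or similar), track the $\sqrt{b}$ arising from the vanishing factor on $[g(x)-1,g(x)]$, and verify that the resulting right-hand-side $y$-interval sits inside $[g(x)-6,\,g(x)+b/4]$.
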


    \begin{proof}
        Let $0<\da<1$ be a small parameter to be specified shortly.  For the function $g$ its Steklov transform is given by
        $$
        g_{\delta}(x):=\frac{1}{4 \delta^{2}} \iint_{[-\delta, \delta]^2} g(x+u+v) d u d v, \quad   \ -1\leq x\leq 2.
        $$
        Then   $g_\da\in C^2[-1,2]$, and moreover,   for any $x\in [-1, 2]$, we have
        \begin{align*}
        |g(x)-g_\da(x)| &=\f 1 {8\da^2} \Bl|\iint_{[-\da, \da]^2} \int_0^{u+v} \Bl( g'(x+s)-g'(x-s)\Br) \, ds dudv \Br|\leq \wt C L \da^\al, \\
        |g'(x)-g_\da'(x)|&\leq \f 1 {4\da^2} \iint_{[-\da, \da]^2} |g'(x+u+v)-g'(x)|\, dudv \leq \wt CL \da^{\al-1}, \\
        |g_\da''(x)|&\leq \f 1{4\da^2} \int_{-\da}^\da |g'(x+u+\da)-g'(x+u-\da)|\, du \leq \wt C L \da^{\al-2}.
        \end{align*}
        Now we  choose   $\da>0$ so that  $\wt CL\da^\al=\f b {8}$.

        Next, for $x\in [0,1]$, we set $ \tau_x^\da : =(1, g_\da'(x))$.
        Then for $f\in\Pi_n^2$ and $(x, y)\in G$,   \begin{align*}
        |\p_{\tau_x} f(x, y)| &=|\p_1f(x,y) +g'(x) \p_2 f(x, y)| \leq |\p_{\tau_x^\da} f(x,y)| + \wt C L\da^{\al-1} |\p_2 f(x, y)|\\
        &= |\p_{\tau_x^\da} f(x,y)| + C L^{1/\al} b^{1-\f 1\al}|\p_2 f(x, y)|.
        \end{align*}
        Thus,
        \begin{align*}
        \Bigl(\iint_G |\p_{\tau_x} f(x,y)|^p \, \f {dxdy}{\sqrt{\da(x,y)+b}}\Bigr)^{1/p} & \leq C\Bl( J_1+ L^{1/\al} J_2\Br),\end{align*}
        where
        \begin{align*}
            J_1:&= \Bl( \int_0^1 \Bl[\int_{g(x)-1}^{g(x)} |\p_{\tau_x^\da } f(x,y)|^p \f {dy}{\sqrt{\da(x,y)+b}}\Br]dx \Br)^{1/p},\\
        J_2:&= b^{1-\f 1\al} \Bl( \int_0^1 \Bl[\int_{g(x)-1}^{g(x)} |\p_2 f(x,y)|^p\, \f {dy}{\sqrt{\da(x,y)+b}}\Br] dx \Br)^{1/p}.
        \end{align*}

        On the one hand, for each fixed $x\in [0,1]$ and each $0<b\leq 1$, the function
        $$y\mapsto ( g(x)-y+b)^{-\f12}=(\da(x,y)+b)^{-\f12}$$
         is a doubling weight on $[g(x)-2, g(x)]$ with the doubling constant independent of $b$. Thus,
         using~\eqref{eqn:1d-weighted-bern}, we obtain
        \begin{align*}
    J_2\leq  C n b^{\f 12-\f1\al} \Bl( \int_0^1\Bl[ \int_{g(x)-2}^{g(x)+ \f b8
    } | f(x,y)|^p\, \f {dy}{\sqrt{\da(x,y)+b}}\Br]dx \Br)^{1/p}.
        \end{align*}
        On the other hand, since $\|g-g_\da\|_\infty \leq \wt C L\da^\al=\f b {8}$,
        we have
        \begin{align*}
        J_1
        &\leq  C\Bl( \int_0^1 \Bl[\int_{g_\da(x)-1-\f b{8}}^{g_\da(x)+\f b{8}} |\p_{\tau_x^\da } f(x,y)|^p \f {dy}{\sqrt{g_\da(x)-y+b}}\Br]dx \Br)^{1/p}.
        \end{align*}
        Using Theorem  \ref{thm-1-1:Bern} with $M=C L\da^{\al-2}$, we obtain
        \begin{align*}
    J_1 &\leq C \sqrt{L} n  \da^{\f \al 2-1} \Bl( \int_0^1\Bl[ \int_{g_\da(x)-5-\f b{8}}^{g_\da(x)+\f b{8}} |f(x,y)|^p \f {dy}{\sqrt{g_\da(x)-y+b}}\Br]dx \Br)^{1/p}\\
        &\leq C L^{1/\al}  b^{\f12-\f 1\al}n\Bl( \int_0^1 \Bl[\int_{g(x)-6}^{g(x)+\f b4} |f(x,y)|^p \f {dy}{\sqrt{g(x)-y+b}}\Br]dx \Br)^{1/p}.
        \end{align*}
        Putting the above estimates together, we get the desired inequality.        \end{proof}

    \begin{proof}[Proof of Theorem \ref{thm-2-2}]
        Since
        $$\p_{\tau_x} f(x,y)=\p_1 f(x,y)+g'(x) \p_2 f(x,y)$$
        is a polynomial of degree at most $n$ in the variable $y$, we obtain by the Remez inequality (see~\cite{MT2}*{(7.17)} and~\cite{Er}*{Sect.~7}) that
        \begin{align*}
        & \iint_G ( \da_n(x,y))^{\ga p} | \p_{\tau_x} f(x,y)|^p\, dxdy\leq C\int_0^1 \int_{g(x)-1}^{g(x)- \f {2} {n^2}} (\da(x,y))^{\ga p} | \p_{\tau_x} f(x,y)|^p\, dydx\\
        &\leq C \sum_{1\leq j<2\log_2 n} \Bl( \f {2^j}{n^2}\Br)^{\ga p}  \int_0^1 \Bl[\int_{g(x)-\f {2^{j+1}} {n^2}}^{g(x)- \f {2^j} {n^2} } | \p_{\tau_x} f(x,y)|^p  \, dy\Br] dx\\
            &\leq C \sum_{1\leq j<2\log_2 n} \Bl( \f {2^j}{n^2}\Br)^{\ga p+1/2}  \int_0^1 \Bl[\int_{g(x)-1-\f{2^j}{n^2}}^{g(x)- \f {2^{j}} {n^2} } | \p_{\tau_x} f(x,y)|^p  \Bl(\da(x,y)\Br)^{-1/2}\, dy\Br] dx.\end{align*}
    For each integer  $1\leq j<2\log_2 n$,  we apply Lemma \ref{lem-1-2:Bern} to  $b=\f {2^{j+1}} {n^2}$ and the function $g(x)-\f {2^j}{n^2}$ instead of $g$. We then obtain
        \begin{align*}
     \iint_G ( \da_n&(x,y))^{\ga p} | \p_{\tau_x} f(x,y)|^p\, dxdy \\&  \leq C L^{\f p\al} n^p \sum_{1\leq j<2\log_2 n}  \Bl( \f {n^2}{2^j}\Br)^{ \ga p } \Bl( \f {2^j}{n^2}\Br)^{\ga p+1/2}
         \int_{-1}^2 \Bl[\int_{g(x)-8 }^{g(x)-\f {2^{j-1}}{n^2} } | f(x,y)|^p\Bl(\da(x,y)\Br)^{-1/2}\, dy\Br] dx\\
        &\leq C L^{\f p\al} n^p \int_{-1}^2 \Bl[\int_{g(x)-8 }^{g(x)-\f 1{n^2} } | f(x,y)|^p \sum_{1\leq j \leq \log_2(2n^2 \da(x,y)) } \Bl( \f {2^j} {n^2 \da(x,y)}\Br)^{1/2}dy \Br] dx\\
        &\leq C L^{\f p\al} n^p \int_{-1}^2 \int_{g(x)-8 }^{g(x) } | f(x,y)|^pdydx.
        \end{align*}

    \end{proof}

    \section{Tangential Bernstein's inequality: Higher-dimensional case}

    In order to establish the corresponding  result on a general $C^\al$-domain, we need to formulate a slightly stronger version of the  tangential Bernstein inequality on high dimensional domains of special type. Let us first introduce the general set up for the rest of this section.

    We will     write a point in $\RR^d$ in the form  $(x, y)$ with $x\in\RR^{d-1}$ and $y=x_d\in \RR$. Sometimes for the sake of simplicity,   we  also  use a Greek letter to denote a point in $\RR^d$ and write it in the form $\xi=(\xi_x, \xi_y)$ with $\xi_x\in\RR^{d-1}$ and $\xi_y\in\RR$.

Let $D_1:=[0,1]^{d-1}$ and $D_2=[-1,2]^{d-1}$, where $d\ge 3$.  Let   $1\leq \al \leq 2$, and let   $g$  be a $C^\al$-function on $[-4, 4]^{d-1}$   with constant $L>1$:   $$\|\nabla g(x+t)-\nabla g(x)\|\leq L \|t\|^{\al-1}\   \   \text{ whenever $x, x+t \in [-4,4]^{d-1}$},$$  where $\|\cdot\|$ denotes the Euclidean norm.
 Define
$$G:=\Bl\{(x,y):\  \  x\in D_1,\   \    g(x)-1\leq y\leq g(x)\Br\},$$
and
$$ G_\ast:=\Bl\{(x,y):\  \ x\in D_2, \  g(x)-8\leq y\leq g(x)\Br\}.$$
We call the set
\begin{align*}
\p' G :&=\{ (x, g(x)):\  \   x\in D_1\},
\end{align*}
the essential boundary of $G$. We also define $\p' G_\ast :=\{ (x, g(x)):\  \   x\in D_2\}$.

Given a point $x_0\in D_1$, we denote by $S_{x_0}$ the space of all tangent vectors to the surface $y=g(x)$, $x\in D_2$ at the point $(x_0, g(x_0))$; namely,
$$ S_{x_0}:=\Bl\{ \xi\in\RR^{d}:\   \   \xi\cdot \big(\nabla g(x_0),-1\big)=0 \Br\}.$$
 Let $e_1=(1,0,\cdots, 0)$, $\cdots$, $e_{d} =(0,\cdots, 0, 1)$ denote   the standard  canonical basis of  $\RR^{d}$.
For $j=1,2,\cdots, d-1$,   we define  \begin{equation}\label{11-4-0}
\xi_j (x_0) :=e_j + \p_j g(x_0) e_{d}.
\end{equation}
Geometrically,  $\xi_j(x_0)$ is a tangent vector to the essential boundary  $\p' G$ at the point  $(x_0, g(x_0))$ that is parallel to   the $x_jx_d$-coordinate plane.
It is easily seen that
\begin{equation}\label{2-2}
S_{x_0}=\Big\{ \big (\eta,\   \eta\cdot \nabla g(x_0)\big):\  \  \eta\in\RR^{d-1}\Br\} =\sspan\Bl \{ \xi_j(x_0):\   \  1\leq j\leq d-1\Br\}.
\end{equation}

Next, for $\xi=(\xi_x, \xi_y)\in G$, we define
\[ \da_n(\xi): ={g(\xi_x)-\xi_y}+\f1{n^2},\   \   n=1,2,\cdots,\]
and set, for  $\mu>1$,
$$\Xi_{n, \mu,\al} (\xi):= \Bl\{ u\in D_1:\  \  \|u-\xi_x\|\leq \mu (\da_n(\xi))^{1/\al}\Br\}.$$
 For $f\in C^1(G_\ast)$ and $u\in D_1$, define
\[ |\nabla_{\tan, u} f(\xi)|:=\max_{\eta\in S_u\cap \sph} |\p_\eta f(\xi)|,\   \ \xi\in G,\]
where $\sph$ denotes the unit sphere of $\RR^d$.
Using \eqref{2-2}, we have \begin{equation}
|\nabla_{\tan, u} f(\xi)|\sim \sum_{j=1}^{d-1} |\p_{\xi_j(u)} f(\xi)|,  \   \  \xi\in G, \   \ u\in D_1.
\end{equation}
Note that if $\xi=(\xi_x, \xi_y) \in \p'G$ and $u=\xi_x$, then $ |\nabla_{\tan, \xi_x} f(\xi)|$ is the length of the tangential gradient of $f$ at $\xi$. In the following result, we choose $u$ to be an arbitrarily given point in a certain neighborhood of $\xi_x$.

\begin{thm}\label{thm-3-2}
    Let  $\ga=\f 1 \al-\f12$, and   $0<p<\infty$.
    Then for any  $f\in\Pi_n^2$ and parameter $\mu>1$,    we have
    \begin{equation}\label{3-4}
    \Bl\| (\da_n(\xi)) ^\ga \max_{ u\in \Xi_{n,\mu,\al}(\xi)}\Bigl|  \nabla_{\tan, u}f(\xi)\Br|\Br\|_{L^p(G; d\xi)} \leq C(\mu, L, p)  n \|f\|_{L^p(G_\ast)}.
    \end{equation}
\end{thm}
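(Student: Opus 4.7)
The plan is to reduce to the two-dimensional \cref{thm-2-2} by slicing in each $(x_j,x_d)$-plane, and to absorb the maximum over $u\in\Xi_{n,\mu,\al}(\xi)$ into a carefully separated ``normal'' term whose weight is exactly what one-dimensional Bernstein can handle.

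By~\eqref{2-2} together with the uniform bound on $\|\nabla g\|$ on $[-4,4]^{d-1}$ (which follows from the $C^\al$-hypothesis), the tangential seminorm satisfies $|\nabla_{\tan,u}f(\xi)|\le C(L)\sum_{j=1}^{d-1}|\p_{\xi_j(u)}f(\xi)|$, so it suffices to handle a single index $j$. The key algebraic decomposition is
\[ \p_{\xi_j(u)}f(\xi)=\p_{\xi_j(\xi_x)}f(\xi)+\bigl(\p_j g(u)-\p_j g(\xi_x)\bigr)\p_d f(\xi), \]
whose first summand is independent of $u$. The $C^\al$-hypothesis and the definition of $\Xi_{n,\mu,\al}(\xi)$ yield, uniformly for $u\in\Xi_{n,\mu,\al}(\xi)$,
\[ |\p_j g(u)-\p_j g(\xi_x)|\le L\|u-\xi_x\|^{\al-1}\le L\mu^{\al-1}(\da_n(\xi))^{1-1/\al}. \]
Consequently the $\max_u$ disappears and the left-hand side of~\eqref{3-4} is bounded by $C(\mu,L)(A_j+B)$, where
\[ A_j:=\Bl\|(\da_n)^\ga\,|\p_{\xi_j(\xi_x)}f|\Br\|_{L^p(G)},\qquad B:=\Bl\|(\da_n)^{1/2}\,|\p_d f|\Br\|_{L^p(G)}, \]
using the crucial arithmetic $\ga+1-1/\al=1/2$.

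For $A_j$ I would freeze the remaining coordinates $x'=(x_1,\dots,x_{j-1},x_{j+1},\dots,x_{d-1})$ and set $h(t):=g(x_1,\dots,x_{j-1},t,x_{j+1},\dots,x_{d-1})$. The $C^\al$-condition for $g$ gives $|h'(t+s)-h'(t)|\le L|s|^{\al-1}$, so $h\in C^\al[-4,4]$ with constant $L$; the restriction of $f$ to the $(x_j,x_d)$-slice is a bivariate polynomial of total degree $\le n$; and the 2D tangent vector $(1,h'(\xi_j))$ at $(\xi_j,\xi_y)$ coincides with the nontrivial $(x_j,x_d)$-components of $\xi_j(\xi_x)$, so $\p_{\xi_j(\xi_x)}f(\xi)$ is exactly the 2D tangential derivative in the slice. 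Applying \cref{thm-2-2} slice-by-slice and integrating in $x'\in[0,1]^{d-2}$ by Fubini then yields $A_j\le C(L,p)\,n\,\|f\|_{L^p(G_\ast)}$, since the union of the extended 2D slices is contained in $G_\ast$.

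For $B$, for each fixed $x\in D_1$ I regard $y\mapsto f(x,y)$ as a univariate polynomial of degree $\le n$; after an affine rescaling of $[g(x)-1,g(x)]$ onto $[-1,1]$, the weight $(g(x)-y+n^{-2})^{1/2}$ becomes (up to equivalent constants) $\sqrt{1-t^2}$ near the endpoint $t=1$, so~\eqref{eqn:1d-weighted-bern} with $w\equiv 1$ gives the factor $n$ with right-hand side an $L^p$ integral of $f$ over a slightly enlarged interval; integration in $x\in D_1$ then yields $B\le C(p)\,n\,\|f\|_{L^p(G_\ast)}$. The main technical point is precisely the balancing captured in the second paragraph: the radius $\mu(\da_n(\xi))^{1/\al}$ in the definition of $\Xi_{n,\mu,\al}(\xi)$ is calibrated so that the H\"older error of $\nabla g$ multiplied by $(\da_n)^\ga$ lands exactly on the one-dimensional Bernstein weight $(\da_n)^{1/2}$; any coarser scale for $u$ would break this match and the reduction would fail.
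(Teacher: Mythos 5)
Your proposal is correct and follows essentially the same route as the paper: the decomposition $\p_{\xi_j(u)}=\p_{\xi_j(\xi_x)}+(\p_j g(u)-\p_j g(\xi_x))\p_d$ with the H\"older estimate and the arithmetic $\ga+1-\tfrac1\al=\tfrac12$, then the slice-and-Fubini reduction to \cref{thm-2-2} (this is exactly the paper's Lemma~\ref{lem-3-2}) for the tangential term, and univariate Bernstein--Markov for the normal term. The only cosmetic difference is that the paper packages the slicing step as a separate lemma, whereas you carry it out inline.
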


Our goal in this section is to prove Theorem \ref{thm-3-2}. We need the following lemma.

        \begin{lem}\label{lem-3-2}  If  $1\leq j\leq d-1$,  $0<p\leq \infty$ and $\ga=\f 1 \al-\f12$, then for any      $f\in \Pi_n^{d}$,
          we have
        $$ \Bl(\int_G  (\da_n(x,y))^{\ga p} | \p_{\xi_j(x)} f(x,y)|^p\, dxdy\Br)^{1/p} \leq C_p L^{\f 1 \al}  n \|f\|_{L^p(G_\ast)}.$$
    \end{lem}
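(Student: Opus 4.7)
The plan is to reduce this $d$-dimensional estimate to the two-dimensional tangential Bernstein inequality already established in \thmref{thm-2-2} by slicing $G$ along the coordinate 2-plane spanned by $e_j$ and $e_d$. Fix $j\in\{1,\dots,d-1\}$ and, for $\hat x\in [0,1]^{d-2}$ collecting the remaining $d-2$ spatial coordinates, define the univariate function $\tilde g_{\hat x}(s)$ on $[-4,4]$ by inserting $s$ in the $j$-th slot of $g$ and filling the other slots with the entries of $\hat x$. Since only $\partial_j g$ enters $\tilde g'_{\hat x}$, the $C^\alpha$-condition $\|\nabla g(x+t)-\nabla g(x)\|\le L\|t\|^{\alpha-1}$ immediately gives $|\tilde g'_{\hat x}(s+t)-\tilde g'_{\hat x}(s)|\le L|t|^{\alpha-1}$, so $\tilde g_{\hat x}$ is a $C^\alpha$ function on $[-4,4]$ with the same constant $L$ demanded in \thmref{thm-2-2}.

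Next, introduce the bivariate polynomial $\tilde f_{\hat x}(s,y):=f(\dots,s,\dots,y)$ of total degree at most $n$. At a sliced point $x=(\hat x,s)$ the chain rule gives
\[
\partial_{\xi_j(x)}f(x,y)=\partial_j f+\partial_j g(x)\,\partial_d f=(\partial_1+\tilde g'_{\hat x}(s)\partial_2)\tilde f_{\hat x}(s,y)=\partial_{\tau_s}\tilde f_{\hat x}(s,y),\quad \tau_s=(1,\tilde g'_{\hat x}(s)),
\]
and since $g(x)=\tilde g_{\hat x}(s)$, the weight $\delta_n(x,y)=g(x)-y+n^{-2}$ along the slice agrees with the two-dimensional weight built from $\tilde g_{\hat x}$.

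Applying \thmref{thm-2-2} to the pair $(\tilde f_{\hat x},\tilde g_{\hat x})$ for each fixed $\hat x$ yields
\[
\int_0^1\!\!\int_{\tilde g_{\hat x}(s)-1}^{\tilde g_{\hat x}(s)}(\delta_n(s,y))^{\gamma p}|\partial_{\xi_j(x)}f|^p\,dy\,ds\le C_p L^{p/\alpha}n^p\int_{-1}^{2}\!\!\int_{\tilde g_{\hat x}(s)-8}^{\tilde g_{\hat x}(s)}|\tilde f_{\hat x}(s,y)|^p\,dy\,ds.
\]
I would then integrate in $\hat x$ over $[0,1]^{d-2}$ and invoke Fubini: the left-hand side is precisely $\int_G(\delta_n)^{\gamma p}|\partial_{\xi_j(x)}f|^p\,dx\,dy$, while the right-hand side is bounded by $C_p L^{p/\alpha}n^p\|f\|_{L^p(G_\ast)}^p$ since $[0,1]^{d-2}\subset[-1,2]^{d-2}$ places the sliced strip on the right inside $G_\ast$.

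The main (minor) hurdle is merely checking that the tangent direction $\xi_j(x)$, the bivariate polynomial structure, and the boundary weight $\delta_n$ all transform cleanly through the one-slice reduction; after that, it is a clean Fubini. The endpoint $p=\infty$ can be handled by letting $p\to\infty$ in the finite-$p$ bound once the $p$-dependence of $C_p$ is tracked, or equivalently by rerunning the same slicing argument against the uniform-norm analogue of the two-dimensional tangential Bernstein inequality.
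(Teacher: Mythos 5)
Your slicing-and-Fubini reduction to the two-dimensional inequality of \cref{thm-2-2} is exactly the argument the paper uses (stated there for $d=3$, $j=1$ with a remark that it generalizes). The verification that the $C^\alpha$ constant, the tangent direction $\xi_j(x)$, and the weight $\delta_n$ restrict correctly to each slice, and that $[0,1]^{d-2}\times[-1,2]\subset[-1,2]^{d-1}$ keeps the right-hand side inside $G_\ast$, matches the paper's proof.
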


\begin{proof}  For simplicity, we assume that $d=3$ and $p<\infty$. (The proof below works equally well for $d>3$ or $p=\infty$.) Without loss of generality, we may also assume that $j=1$.
    First, by Fubini's theorem, we have
    \begin{align}\label{2-2-0:bern}
    \iiint\limits_G & (\da_n(x,y))^{\ga p} \Bl|\p_{\xi_1(x_1, x_2)}  f(x_1, x_2, y)\Br|^pdx_1dx_2dy =\int_{0}^1  I(x_2) \, dx_2,
    \end{align}
    where
    $$ I(x_2) :=  \int_{0}^1  \int_{g(x_1, x_2)-1}^{g(x_1, x_2)} \Bl|\bigl(\p_1 +\p_1 g(x_1, x_2) \p_3\bigr) f(x_1, x_2, y)\Br|^p (g(x_1, x_2)-y+n^{-2})^{\ga p}\, dy \, dx_1.$$
    For each fixed $x_2\in [0,1]$, applying Theorem \ref{thm-2-2} to the function $g(\cdot, x_2)$ and the polynomial $f(\cdot, x_2, \cdot)$, we obtain
    \begin{align*}
    I(x_2) & \leq C L^{p/\al} n^{p}  \int_{-1}^{2} \int_{g(x_1, x_2)-8}^{g(x_1, x_2)} |f (x_1,  x_2, y)|^p\, dy dx_1.
    \end{align*}
    Integrating this last inequality over $x_2\in [-1,2]$,  we deduce  the stated estimate in Lemma \ref{lem-3-2}.
\end{proof}

%
%
%
%
%
%

    \begin{proof}[Proof of Theorem \ref{thm-3-2}]

    For $(x,y)\in G$ and $u\in D_1$,  we have
\begin{equation}\label{10-8-eq}  \p_{\xi_j(u)} =\p_{\xi_j(x) } +\Bl( \p_j g(u)-\p_j g(x)\Br) \p_d.\end{equation}
It follows that  for each $(x, y)\in G$,
\begin{align*}
\max_{u\in \Xi_{n, \mu,\al} (x, y) } \Bigl|  \p_{\xi_j (u) } f(x,y)\Br|&\leq  \Bigl|  \p_{\xi_j(x)} f(x,y)\Br|+C L \|u-x\|^{\al-1} |\p_d f(x,y)|\\
&\leq  \Bigl|  \p_{\xi_j(x)} f(x,y)\Br|+C_\mu  L  \da_n(x,y)^{(\al-1)/\al}|\p_d f(x,y)|.
\end{align*}

Using Lemma \ref{lem-3-2}, we have
    $$ \Bl(\int_G  (\da_n(x,y))^{\ga p} | \p_{\xi_j(x)} f(x,y)|^p\, dxdy\Br)^{1/p} \leq C_p L^{\f 1 \al}  n \|f\|_{L^p(G_\ast)}.$$
On the other hand,  we have
\begin{align*}
& \Bl( \int_G |\da_n (x, y)|^{(1-\f 1\al) p +\ga p} |\p_d f(x,y)|^p dxdy \Br)^{1/p}\\
& = \Bl( \int_{D_1} \int_{g(x)-1}^{g(x)} (\sqrt{\da_n(x,y)})^p |\p_df(x,y)|^p\, dy dx\Br)^{1/p}\leq C(p) n \|f\|_{L^p(G_\ast)},
\end{align*}
where we used the Bernstein-Markov  inequality for univariate  algebraic polynomials in the last step.
Putting the above together, and noticing that
$$ |\nabla_{\tan, u} f|\sim \max_{1\leq j\leq d-1} |\p_{\xi_j(u)} f|,$$
we obtain the desired estimate.
    \end{proof}

\section{Tangential $L^p$ Bernstein-Markov inequalities on general $C^\al$ domains}

 In this section, we will
extend
the tangential  Bernstein type inequality  established in the last section to a  more general $C^\al$-domain with $1\leq \al\leq 2$. Our goal is to prove Theorem \ref{thm-1-1A}  (see Theorem \ref{thm-5-2} and Corollary\ref{cor-5-3} below).

Let us first write  the tangential Bernstein inequality in Theorem \ref{thm-3-2}  in an equivalent form, which  can be easily extended to a general $C^\al$-domain.  With  the notations and assumptions of the last section, we have  the following lemma, which will play a crucial role in our discussion.

\begin{lem}\cite[Lemma 3.2]{DaiPry}\label{lem-9-1} Let  $\Ga'=\{ (x, g(x)):\  \   x\in D_2\}$.  If $\xi=(\xi_x, \xi_y)\in  G$, then
    $$c_\ast (g(\xi_x)-\xi_y)\leq \dist(\xi, \Ga')\leq g(\xi_x)-\xi_y,$$
    where $\dist(\xi, \Ga') =\inf_{\eta\in \Ga'} \|\xi-\eta\|$, and
    $ c_\ast =  \f 1 {3 \sqrt{ 1+\|\nabla g\|_\infty^2}}$
    and   $\|\nabla g\|_\infty=\max_{x\in D_2}\|\nabla g(x)\|$.
\end{lem}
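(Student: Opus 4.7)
The plan is to split the claimed two-sided estimate into its upper and lower halves and treat each separately. The upper bound is essentially immediate, while the lower bound requires only a short case analysis driven by the Lipschitz constant of $g$.

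For the upper bound I would take $\eta_0 := (\xi_x, g(\xi_x))$. Since $\xi_x \in D_1 \subset D_2$, this point belongs to $\Ga'$, and because $\xi = (\xi_x, \xi_y) \in G$ forces $\xi_y \leq g(\xi_x)$, one gets $\|\xi - \eta_0\| = g(\xi_x) - \xi_y$. Hence $\dist(\xi, \Ga') \leq g(\xi_x) - \xi_y$.

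For the lower bound, set $h := g(\xi_x) - \xi_y \geq 0$ and $M := \|\nabla g\|_\infty = \max_{x \in D_2} \|\nabla g(x)\|$. Fix an arbitrary $\eta = (u, g(u)) \in \Ga'$ with $u \in D_2$, and write $t := \|u - \xi_x\|$. Since $D_2$ is convex, the straight segment from $\xi_x$ to $u$ lies in $D_2$, so the mean value theorem applied to $g$ along this segment gives $|g(u) - g(\xi_x)| \leq M t$. By the reverse triangle inequality,
$$ |\xi_y - g(u)| \geq |g(\xi_x) - \xi_y| - |g(\xi_x) - g(u)| \geq h - M t. $$
I would then split into two regimes according to the size of $Mt$ relative to $h/3$. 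If $M t \leq h/3$, then $|\xi_y - g(u)| \geq 2h/3$, and consequently $\|\xi - \eta\| \geq 2h/3$. If $M t > h/3$, then $\|\xi - \eta\| \geq t > h/(3M)$. In either case
$$ \|\xi - \eta\| \geq \frac{h}{3\max(1, M)} \geq \frac{h}{3\sqrt{1 + M^2}} = c_\ast h, $$
where I used $\max(1, M) \leq \sqrt{1 + M^2}$. Taking the infimum over $\eta \in \Ga'$ yields the required lower estimate.

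There is no real obstacle: the lemma is a quantitative version of the intuitive geometric fact that for a Lipschitz graph, the Euclidean distance from a point below the graph to the graph is comparable to the vertical distance, with the comparison constant determined by the Lipschitz norm of $g$. The only mildly delicate point is matching the stated constant: the factor $3$ in $c_\ast$ comes from choosing the threshold $h/3$ in the case split, and the dependence $1/\sqrt{1 + \|\nabla g\|_\infty^2}$ then appears through the crude but sharp-in-form bound $\max(1, M) \leq \sqrt{1 + M^2}$.
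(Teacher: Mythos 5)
Your proof is correct. It shares the same broad skeleton as the paper's argument (trivial upper bound via the point $(\xi_x,g(\xi_x))$, then a case split on the size of the horizontal displacement $t=\|u-\xi_x\|$), but the execution of the lower bound is genuinely more elementary. The paper takes the actual minimizing point $(x,g(x))$, expands $\|\xi-(x,g(x))\|^2$, and bounds the cross terms by $\tfrac{7}{9}(g(\xi_x)-\xi_y)^2$ using the numerical identity $c_\ast^2(1+M^2)+2Mc_\ast\leq\tfrac79$, concluding $\dist(\xi,\Ga')\geq\tfrac{\sqrt2}{3}(g(\xi_x)-\xi_y)\geq c_\ast(g(\xi_x)-\xi_y)$. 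You instead bound $\|\xi-\eta\|$ from below by a single coordinate in each regime: by the vertical gap $|\xi_y-g(u)|\geq h-Mt\geq 2h/3$ when $Mt\leq h/3$, and by the horizontal gap $t>h/(3M)$ otherwise, then merge the two via $\max(1,M)\leq\sqrt{1+M^2}$. Your route avoids the squared-norm expansion entirely and is arguably cleaner; both give exactly the constant $c_\ast$ stated in the lemma, and both correctly use only the $C^1$ bound $\|\nabla g\|_\infty$ (via the mean value theorem on the convex set $D_2$), which is the essential point the paper emphasizes.
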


Lemma \ref{lem-9-1} was proved in \cite[Lemma 3.2]{DaiPry} for $C^2$-domains, but the proof there uses only the $C^1$ condition. For the sake of  completeness, we copy the proof here.

\begin{proof}
    Let  $\xi=(\xi_x, \xi_y)\in G$. Since $(\xi_x, g(\xi_x))\in \p' G\subset \Ga'$, we have
    $$\dist(\xi, \Ga')\leq \|(\xi_x,\xi_y)-(\xi_x, g(\xi_x))\|=g(\xi_x)-\xi_y.$$

    It remains to prove  the inverse inequality,
    \begin{equation}\label{3-4a}
    \dist(\xi, \Ga')\ge     c_\ast (g(\xi_x)-\xi_y).
    \end{equation}
    Let   $(x, g(x))\in\Ga'$ be such that
    $$\dist(\xi, \Ga') =\|\xi-(x,g(x))\|.$$
    Since
    $$ \dist(\xi, \Ga')\ge \| x-\xi_x\|,$$
    \eqref{3-4a} holds trivially if  $\|x-\xi_x\|\ge   c_\ast (g(\xi_x)-\xi_y)$. Thus, without loss of generality, we may assume that
    $\|x-\xi_x\|< c_\ast (g(\xi_x)-\xi_y)$. We then write
    \begin{align}
    \|\xi-(x,g(x))\|^2=&\|\xi_x- x\|^2 +|\xi_y -g(\xi_x)|^2+\notag\\
    &+  |g(\xi_x)-g(x)|^2
    +2 (\xi_y-g(\xi_x))\cdot (g(\xi_x)-g(x)).\label{eq-9-2}
    \end{align}
    Since  $\|x-\xi_x\|\leq  c_\ast (g(\xi_x)-\xi_y)$, we have
    \begin{align*}
    & \|\xi_x- x\|^2+
    |g(\xi_x)-g( x)|^2+2 (\xi_y-g(\xi_x))\cdot (g(\xi_x)-g( x))\\
    &\leq (1+\|\nabla g\|_\infty^2) \|\xi_x-x\|^2 +2\|\nabla g\|_\infty (g(\xi_x)-\xi_y)\| \xi_x- x\|\\
    &\leq \Bl[c_\ast^2 (1+\|\nabla g\|_\infty^2)+ 2 \|\nabla g\|_\infty c_\ast\Br] (g(\xi_x)-\xi_y)^2\leq \f 79 (g(\xi_x)-\xi_y)^2.
    \end{align*}
    Thus, using~\eqref{eq-9-2},   we obtain
    $$
    \dist(\xi, \Ga')^2= \|\xi-(x,g(x))\|^2
    \ge \f29 |\xi_y -g(\xi_x)|^2,$$
    which implies the desired lower estimate \eqref{3-4a}. This completes the proof of Lemma~\ref{lem-9-1}.
\end{proof}


Next, we recall that for  $u\in D_1$,
\[ |\nabla_{\tan, u} f(\xi)|=\max_{\eta\in S_u\cap \sph} |\p_\eta f(\xi)|,\   \ \xi\in G.\]
For convenience, we also define 
\[|\nabla_{\tan, (u, g(u))} f(\xi)|:= |\nabla_{\tan, u} f(\xi)|,\  \ u\in D_1,\   \xi\in G.\]
Let $\Ga'=\p'G_\ast$.  By Lemma \ref{lem-9-1}, we have  that  for  each $\xi=(\xi_x, \xi_y)\in G$,
\[ \vi_n(\xi): =\sqrt{g(\xi_x)-\xi_y}+\f1n\sim \sqrt{\dist(\xi, \Ga')} +\f 1n =: \vi_{n, \Ga'} (\xi).\]
Recall also that  for  $\mu>1$,
$$\Xi_{n, \mu,\al} (\xi):= \Bl\{ u\in D_1:\  \  \|u-\xi_x\|\leq \mu |\vi_n(\xi)|^{2/\al}\Br\}.$$
If  $u\in \Xi_{n, \mu,\al} (\xi)$  and $\xi=(\xi_x, \xi_y)$,  then  $\eta=(u, g(u))\in\p' G$, and
\begin{align*}
\|\eta-\xi\|&\leq \|u-\xi_x\| + |g(u)-g(\xi_x)| +|g(\xi_x) -\xi_y|\\
&\leq C \Bl(\|u-\xi_x\|+ \dist(\xi, \Ga')\Br)\leq  C \mu \Bl(\vi_{n, \Ga'} (\xi)^{2/\al} + \dist(\xi, \Ga')\Br).
\end{align*}
Since $\Ga'$ is a bounded set and $\al\ge 1$, we have
\[  \dist(\xi, \Ga')\leq C \dist(\xi, \Ga')^{1/\al} \leq C \vi_{n, \Ga'} (\xi)^{2/\al} .\]
Thus,
\begin{align*}
\Bl\{ (u, g(u)):\  \  u\in \Xi_{n, \mu,\al} (\xi)\Br\}\subset  \Bl\{ \eta\in\Ga':\  \  \|\eta-\xi\|\leq C\mu \vi_{n,\Ga'}(\xi)^{2/\al}\Br\}.
\end{align*}
The corresponding inverse relation with a possibly   different value of $\mu$ holds as well. Thus, we can reformulate the tangential Bernstein inequality \eqref{3-4} equivalently as follows:
    \begin{equation*}
\Bl\|(\vi_{n,\Ga'}) ^{2\ga} \mathcal{D}_{n,\mu} f \Br\|_{L^p(G)}\leq C_\mu  n \|f\|_{L^p(G^\ast)},
\end{equation*}
where
\begin{equation*}
\mathcal{D}_{n,\mu} f(\xi):=\max\Bl\{ | \nabla_{\tan, \eta} f(\xi)|:\   \ \eta\in\p \Og',\   \  \|\eta-\xi\|\leq \mu \vi_{n,\Ga'}(\xi)^{2/\al}\Br\}.
\end{equation*}

This  last version of tangential Bernstein inequality can be easily extended to a more general $C^\al$ domain.

 In the sequel,  we assume $\Og\subset \RR^d$ is  a compact $C^\al$-domain with $1\leq \al\leq 2$, whose precise definition  is given as follows. For $r>0$ and  $\xi\in\RR^d$, we define $\ell_{\alpha}$ balls by
 \[ B^\al(\xi,r):=\{\eta\in \RR^d:\  \ \|\eta-\xi\|_\al < r\},\]
 where $\|\eta\|_\al:=(\sum_{j=1}^d |\eta_j|^\al)^{1/\al}$ for $\eta=(\eta_1,\cdots, \eta_d)\in\RR^d$. Hence $B^2(\xi,r)$ stands for the usual Euclidian balls.

\begin{defn}\label{def-C2} Let $1\leq \al \leq 2$.
    A bounded   set   $\Og$ in $ \RR^{d}$ is called a $C^\al$-domain    if there exist a positive constant  $\k_0$, and a finite cover of the boundary $\p \Og$  by  open sets $\{ U_j\}_{j=1}^J$ in $\RR^d$ such that\begin{enumerate}[\rm (i)]
        \item   for each $1\leq j\leq J$, there exists a function $\Phi_j\in C^\al(\RR^d)$ such that
        $$U_j\cap \p\Og=\{ \xi\in U_j:\  \ \Phi_j(\xi)=0\}\   \ \text{and}\   \ \nabla \Phi_j(\xi)\neq 0,\   \   \    \   \forall \xi\in U_j\cap \p\Og; $$
        \item for each $\xi\in \p\Og$ there exist affine transforms $A_1, A_2$ of $\R^d$ with $\det A_i\ge \kappa_0$ and $A_i(e_1)=\xi$, $i=1,2$, such that
        $$A_1(B^\al (0,1))\subset {\Og}\   \ \text{and}\     \   A_2(B^\al (0,1)) \subset {\RR^d\setminus \Og},$$
        where $e_1=(1,0,\dots,0)$ is the first standard basis vector.
    \end{enumerate}
\end{defn}

Condition (ii) of this definition is needed to ensure that for any point of the boundary both the domain and its complement contain an $\ell_{\alpha}$ ball with ``vertex'' at this point. This is a generalization of the ``rolling ball'' property~\cite{DaiPry}*{Def.~1.1(ii)} for $\alpha=2$, see also~\cite{DaiPry}*{Rem.~1.2}.

Let $\Ga=\p\Og$.
Denote by $\mathbf{n}_\eta$  the  outer unit normal vector   to  $\Ga$ at  $\eta\in\Ga$.
For  $ \xi\in\Og$,  $ f\in C^\infty(\Og)$,
we define
$$ |\nabla_{\tan, \eta} f(\xi)|:= \max\Bl\{
\bl|  \p_{\pmb{\tau}} f(\xi)\br|:\   \   \pmb\tau\in\sph,\  \ \pmb\tau\cdot \mathbf n_\eta=0\Br\},\  \ \eta\in\Ga. $$
Furthermore, given  a parameter $\mu\ge \Bl( \diam(\Og)+1\Br)^2$, we define
\begin{equation}\label{eqn:D max def}
\mathcal{D}_{n,\mu} f(\xi):=\max\Bl\{  |\nabla_{\tan, \eta} f(\xi)|:\   \ \eta\in\p \Og,\   \  \|\eta-\xi\|\leq \mu \vi_{n,\Ga}(\xi)^{2/\al}\Br\},\   \ \xi\in\Og,
\end{equation}
where
\begin{equation*}
\vi_{n,\Ga}(\xi):=\sqrt{\dist(\xi, \Ga)} +n^{-1},\   \ n=1,2,\dots, \xi\in\Og.
\end{equation*}

With the above notation, we can then state our main tangential $L^p$ Bernstein type inequality on general $C^\al$-domains as follows:

\begin{thm}\label{thm-5-2}
    Let  $\ga=\f 1 \al-\f12$, and   $0<p<\infty$.
    Then for any  $f\in\Pi_n^d$ and parameter $\mu>1$,    we have
    $$  \Bl\| (\vi_{n,\Ga} )^{2\ga} \mathcal{D}_{n,\mu} f\Br\|_{L^p(\Og)} \leq C(\mu,\Og, p)  n  \|f\|_{L^p(\Og)}.$$
\end{thm}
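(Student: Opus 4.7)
The plan is to reduce the inequality on the general $C^\al$-domain $\Og$ to finitely many local applications of \cref{thm-3-2} on graph patches, combined with a standard interior $L^p$ Bernstein estimate. By \cref{def-C2}, I choose a finite open cover $\{U_j\}_{j=1}^J$ of $\p\Og$ such that, after a suitable rigid motion $T_j$ of $\RR^d$, the set $T_j(U_j\cap\Og)$ lies inside a graph domain $G_j$ of the form treated in \cref{thm-3-2}, defined by a $C^\al$-function $g_j$ whose constant is uniformly bounded by some $L=L(\Og)$, and with $T_j^{-1}(G_{j,\ast})\subset\Og$. I then choose shrunken open sets $V_j\Subset U_j$ still covering $\p\Og$, set $r_0:=\min_j\dist(V_j,\p U_j)>0$, and let $\Og_0:=\Og\setminus\bigcup_j V_j$, which is a compact subset of the interior with $\va_0:=\dist(\Og_0,\Ga)>0$.

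Since for $n<N_0$ the inequality is trivial with an $N_0$-dependent constant (absorbed into $C(\mu,\Og,p)$), I may assume $n\ge N_0(\Og,\mu)$ chosen so that $\mu(2\sqrt{\va_0}+N_0^{-1})^{2/\al}<r_0$. This guarantees that for every $\xi\in V_j\cap\Og$ with $\dist(\xi,\Ga)\le\va_0$, every boundary point $\eta$ appearing in the maximum in \eqref{eqn:D max def} automatically satisfies $\eta\in U_j$. In the $T_j$-coordinates the tangent hyperplane to $\p\Og$ at such $\eta$ is exactly the space $S_u$ used in \cref{thm-3-2} with $u=(T_j\eta)_x$, so $|\nabla_{\tan,\eta}f(\xi)|$ coincides with the graph-domain tangential gradient there. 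Combined with \cref{lem-9-1}, which identifies $\vi_{n,\Ga}(\xi)$ with $\sqrt{\da_n(T_j\xi)}$ up to a multiplicative constant, this reduces the contribution of $V_j\cap\Og\cap\{\dist(\cdot,\Ga)<\va_0\}$ to the equivalent form of \cref{thm-3-2} recorded just before the statement of the present theorem, applied to $f\circ T_j^{-1}$ on $G_j$ (after absorbing a change of $\mu$ into a geometric constant depending only on $\Og$).

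For the complementary region $\Og\cap\{\dist(\cdot,\Ga)\ge\va_0\}$ the weight $\vi_{n,\Ga}^{2\ga}$ is uniformly bounded and $\mathcal{D}_{n,\mu}f(\xi)\le|\nabla f(\xi)|$, so it is enough to show $\|\nabla f\|_{L^p(\{\dist(\cdot,\Ga)\ge\va_0\})}\le Cn\|f\|_{L^p(\Og)}$. I would cover this compact set by finitely many balls $B(\zeta_k,r_k)$ with $B(\zeta_k,2r_k)\subset\Og$ and apply the classical weighted $L^p$ Bernstein inequality on each enlarged ball to obtain $\|\nabla f\|_{L^p(B(\zeta_k,r_k))}\le C(\Og)n\|f\|_{L^p(\Og)}$; summation handles this interior piece, and adding it to the boundary-patch estimate yields the theorem. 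The main obstacle is the argument in the second paragraph: verifying that for large $n$ the maximum defining $\mathcal{D}_{n,\mu}f(\xi)$ is effectively local to a single chart, and that within the chart the general tangential-gradient operator matches the graph-domain one of \cref{thm-3-2} up to a harmless rescaling of $\mu$ by a constant depending only on $\Og$.
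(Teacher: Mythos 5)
Your proposal is correct and follows essentially the same route the paper indicates: cover $\p\Og$ by finitely many charts on which $\Og$ is a graph domain of the special type, use \cref{lem-9-1} to identify $\vi_{n,\Ga}$ with $\sqrt{\da_n}$ in each chart, invoke the reformulated version of \cref{thm-3-2} locally (with a harmless adjustment of $\mu$), and treat the compact interior region by a standard interior Bernstein estimate. The paper itself only sketches this, deferring to Section 6 of \cite{DaiPry}, and your outline is a faithful expansion of that sketch; the only blemish is that the condition $\mu(2\sqrt{\va_0}+N_0^{-1})^{2/\alpha}<r_0$ cannot be met by enlarging $N_0$ alone but is arranged by first shrinking the sets $V_j$ (which simultaneously decreases $\va_0$ and increases $r_0$), a cosmetic rather than substantive issue.
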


    Since general  $C^{\alpha}$ domains can be covered  by $C^{\alpha}$ domains of special type,
    Theorem \ref{thm-5-2} can be deduced directly from Theorem \ref{thm-3-2} and Lemma \ref{lem-9-1}. Since the proof is  very close to the proof in Section 6 of \cite{DaiPry}, we skip the details here.

Clearly, $\vi_{n,\Ga}(\xi)\geq n^{-1}$ that is using the lower bound $(\vi_{n,\Ga} )^{2\ga}\geq n^{-\frac{2}{\al}+1}$ in the above theorem yields the next tangential $L^p$ Markov type inequality on general $C^\al$-domains:

\begin{cor}\label{cor-5-3}
                For any  $0<p<\infty, f\in\Pi_n^d$ and parameter $\mu>1$,    we have
    $$  \Bl\|  \mathcal{D}_{n,\mu} f\Br\|_{L^p(\Og)} \leq C(\mu,\Og, p)  n^{\frac{2}{\al}}  \|f\|_{L^p(\Og)}.$$
\end{cor}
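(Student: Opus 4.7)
The Corollary is essentially a bookkeeping exercise on top of Theorem~\ref{thm-5-2}, so the proof should be very short. The key observation is the built-in pointwise lower bound $\vi_{n,\Ga}(\xi)\ge n^{-1}$ coming straight from the definition $\vi_{n,\Ga}(\xi)=\sqrt{\dist(\xi,\Ga)}+n^{-1}$, which is valid uniformly in $\xi\in\Og$.

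Since $1\le\al\le 2$ we have $\ga=\tfrac{1}{\al}-\tfrac{1}{2}\ge 0$, so raising that lower bound to the power $2\ga$ yields $(\vi_{n,\Ga}(\xi))^{2\ga}\ge n^{-2\ga}=n^{1-2/\al}$ at every $\xi\in\Og$, and hence the reciprocal estimate $(\vi_{n,\Ga}(\xi))^{-2\ga}\le n^{2/\al-1}$. The plan is to write $|\mathcal D_{n,\mu}f(\xi)|=(\vi_{n,\Ga}(\xi))^{-2\ga}\cdot(\vi_{n,\Ga}(\xi))^{2\ga}|\mathcal D_{n,\mu}f(\xi)|$, apply the uniform bound $(\vi_{n,\Ga}(\xi))^{-2\ga}\le n^{2/\al-1}$ to the negative-power factor, and then take $L^p(\Og)$ norms of both sides.

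Plugging Theorem~\ref{thm-5-2} into the right-hand side controls the remaining factor $\|(\vi_{n,\Ga})^{2\ga}\mathcal D_{n,\mu}f\|_{L^p(\Og)}$ by $C(\mu,\Og,p)\,n\,\|f\|_{L^p(\Og)}$. Combining gives $\|\mathcal D_{n,\mu}f\|_{L^p(\Og)}\le C(\mu,\Og,p)\,n^{2/\al-1}\cdot n\,\|f\|_{L^p(\Og)}=C(\mu,\Og,p)\,n^{2/\al}\|f\|_{L^p(\Og)}$, which is the claim. There is no genuine obstacle here: the analytic content sits inside Theorem~\ref{thm-5-2}, and the only thing one has to verify is the exponent arithmetic $2\ga+1=2/\al$, which is immediate from $2\ga=2/\al-1$.
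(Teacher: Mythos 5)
Your proof is correct and follows exactly the same route the paper takes: the paper also observes $\vi_{n,\Ga}(\xi)\ge n^{-1}$, hence $(\vi_{n,\Ga})^{2\ga}\ge n^{-2/\al+1}$, and plugs this into Theorem~\ref{thm-5-2}. Nothing to add.
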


\section{Sharpness of the tangential $L^p$ Markov inequality on general $C^\al$ domains}

Corollary 5.3 asserts that $L^p$ norms of tangential derivatives of polynomials of degree $n$ on general $C^\al,  1\leq \alpha\leq 2$ domains are of order $n^{\frac{2}{\al}}$. For $p=\infty$ this upper bound is known to be sharp, see \cite{Kroo}. It is considerably harder to verify $L^p$ lower bounds in case when $0<p<\infty$. This is the question we will settle in this section.

Let us consider $C^{\alpha}, 1<\alpha\leq 2$ domain $D\subset \RR^d$. In what follows affine images of
the $\ell_{\alpha}$ unit ball
$ B^\al(0,1)$ will be called $\ell_{\alpha}$ ellipsoids, with images of the standard basis vectors $e_j:=(\delta_{i,j})_{1\leq i\leq d}, 1\leq j\leq d$ being the vertices of these ellipsoids. Recall that when $D\subset \RR^d$ is a $C^{\alpha}, 1<\alpha\leq 2$ domain then for any $x\in\partial D$ on its boundary there exists an \emph{inscribed} $\ell_{\alpha}$ ellipsoid $E\subset D$ for which $x\in\partial E$. Clearly in order to prove lower bounds for tangential Markov type inequality for $C^{\alpha}, 1<\alpha\leq 2$ domains
we need to ensure that this domain is not in a higher Lip$\alpha$ class. We will accomplish this by assuming that the boundary of the domain contains an \textbf{exactly} $C^{\alpha}$ point $y\in\partial D$ such that for some $\delta>0$ there exist a \emph{superscribed} $\ell_{\alpha}$ ellipsoid $E_1$ with vertex at $y$ so that $D\cap B^2(y,\delta)\subset E_1$. With this definition we have the next general converse to the tangential Markov type inequality given by Corollary 5.3.

\begin{thm} Let $1\leq p<\infty, n\in \NN$.  Assume that  $C^{\alpha}, 1<\alpha\leq 2$ domain $D\subset \RR^d$ contains an exactly $C^{\alpha}$ point. Then there exists
 $f\in\Pi_n^d$ such that
    $$  \|  \mathcal{D}_{n} f\|_{L^p(D)} \geq c(D, p)  n^{\frac{2}{\al}}  \|f\|_{L^p(D)}.$$
\end{thm}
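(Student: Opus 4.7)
The plan is to import the $L^\infty$ sharpness result of \cite{Kroo} and convert it to an $L^p$ bound by multiplying by a needle polynomial concentrated near the exactly $C^\al$ point. After an affine change of variables I may assume the exactly $C^\al$ point $y$ is the origin, the outward unit normal at $y$ is $e_d$, and the superscribed $\ell_\al$ ellipsoid $E_1$ is (up to homothety) the unit $\ell_\al$ ball tangent to $\{x_d=0\}$ at $y$. Combining this with the inscribed $\ell_\al$ ball provided by \cref{def-C2}, there exist $c_1,c_2,\delta_0>0$ with
\[
\{(x',x_d):x_d\le-c_2\|x'\|_\al^\al\}\cap B^2(0,\delta_0)\subset D\cap B^2(0,\delta_0)\subset \{(x',x_d):x_d\le -c_1\|x'\|_\al^\al\}.
\]
This two-sided $\ell_\al$-cusp structure at $y$ governs all the relevant scales. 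Fix the tangent direction $\pmb\tau=e_1$ at $y$; then $\mathcal{D}_n f(\xi)\ge|\p_{\pmb\tau} f(\xi)|$ for every $\xi\in D$ with $\|\xi-y\|\le\mu\vi_{n,\Ga}(\xi)^{2/\al}$, so it suffices to produce $f_n\in\Pi_n^d$ with $|\p_{\pmb\tau}f_n|$ of order $n^{2/\al}$ on a subset $S\subset D$ close to $y$ whose $L^p$ mass is comparable to that of $f_n$ itself.

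By \cite{Kroo}, there exist $g_n\in\Pi_{\lfloor n/2\rfloor}^d$ with $\|g_n\|_\infty\le 1$ and a point $\xi_n^*\in\overline{D}$ at Euclidean distance $\lesssim n^{-2/\al}$ from $y$ with $|\p_{\pmb\tau}g_n(\xi_n^*)|\ge c\,n^{2/\al}$. Let $B_n$ be the Bernstein--Markov box of tangential half-width $c_0 n^{-2/\al}$ and normal half-width $c_0 n^{-2}$ centered at $\xi_n^*$, so that $|B_n\cap D|\asymp n^{-2(d-1)/\al-2}$. I construct a ``needle'' polynomial $Q_n\in\Pi_{\lfloor n/2\rfloor}^d$ satisfying $\|Q_n\|_\infty\le 1$, $Q_n(\xi_n^*)=1$, $|Q_n|\ge 1/2$ on $B_n\cap D$, and $\|Q_n\|_{L^p(D)}^p\lesssim|B_n\cap D|$, built from a normalized power of the $L^2(D)$-reproducing kernel of $\Pi_{n/4}^d$ at $\xi_n^*$ once one establishes the two-sided Christoffel scaling $\lambda_{n/4}(\xi_n^*)\asymp n^{-2(d-1)/\al-2}$ at the $\ell_\al$ cusp.

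Set $f_n:=g_nQ_n\in\Pi_n^d$. The product rule gives $\p_{\pmb\tau}f_n=(\p_{\pmb\tau}g_n)Q_n+g_n(\p_{\pmb\tau}Q_n)$. On $B_n\cap D$ the first summand has absolute value at least $\tfrac{c}{2}n^{2/\al}$, using the slow variation of the polynomial $\p_{\pmb\tau}g_n$ on the Bernstein--Markov scale (a Chebyshev-box type bound in the spirit of \cref{thm-3-2}). Corollary~5.3 applied to $Q_n$ dominates the second summand pointwise by $Cn^{2/\al}\|g_n\|_\infty\le Cn^{2/\al}$, and choosing $c_0$ small enough ensures the first summand dominates on a portion of $B_n\cap D$ of measure $\asymp|B_n\cap D|$. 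Therefore
\[
\|\mathcal{D}_n f_n\|_{L^p(D)}^p\ge (c'n^{2/\al})^p|B_n\cap D|,\qquad\|f_n\|_{L^p(D)}^p\le\|g_n\|_\infty^p\|Q_n\|_{L^p(D)}^p\lesssim|B_n\cap D|,
\]
from which the stated inequality follows by taking $p$-th roots.

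The main technical obstacle is the construction of $Q_n$, equivalently the two-sided Christoffel-function estimate at a point near the exactly $C^\al$ cusp. The proof has to exploit \emph{both} the inscribed and superscribed $\ell_\al$ balls from the localization step, because the tangential Bernstein--Markov scale $n^{-2/\al}$ is strictly smaller than the $n^{-1}$ tangential scale familiar from the $C^2$ case and is dictated by the $\al$-cusp geometry; the reproducing-kernel constructions used in \cite{DaiPry} have to be adapted accordingly. A secondary but non-trivial point is verifying that $\p_{\pmb\tau}g_n$ retains its size of order $n^{2/\al}$ on a definite-measure portion of $B_n$, which follows from a Remez-type argument for polynomials on tangent-scale boxes.
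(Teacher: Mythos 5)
Your approach is genuinely different from the paper's. The paper builds an explicit test polynomial $Q(x,y)=x_1 J_n^{(\beta,\beta)}(y)\,g_n(x,y)$ from a Jacobi polynomial in the normal variable, a linear tangential factor, and a fast-decay factor concentrating near the exactly $C^\alpha$ point, and then computes both $L^p$ norms directly from Szeg\H{o}'s Jacobi asymptotics. You instead try to import the $L^\infty$ sharpness result of \cite{Kroo} and multiply by a ``needle'' built from a power of a Christoffel--Darboux kernel. The paper's route is self-contained and requires only classical Jacobi estimates; yours would, if completed, give a reusable transfer principle from $L^\infty$ to $L^p$ sharpness. However, as written your argument has gaps that are not small.

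The central one is the two-sided Christoffel-function estimate $\lambda_{n}(\xi_n^*)\asymp n^{-2(d-1)/\al-2}$ at a point a boundary distance $\sim n^{-2}$ from an exactly-$C^\al$ cusp. You correctly flag this as the main obstacle, but it is not established anywhere in the paper or in the cited literature for $1<\al<2$; the needle construction and the upper bound $\|Q_n\|_{L^p(D)}^p\lesssim |B_n\cap D|$ both rest entirely on it. Until that estimate is proved, your $Q_n$ does not exist. A second gap is quantitative: you bound the second summand $g_n\,\p_{\pmb\tau}Q_n$ by a fixed constant times $n^{2/\al}$, which is the \emph{same} order as the lower bound $(c/2)n^{2/\al}$ on the first summand, and you try to resolve the competition by shrinking the box size $c_0$. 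That does not help --- the pointwise Markov bound $|\p_{\pmb\tau}Q_n|\le C n^{2/\al}$ has nothing to do with $c_0$. The standard fix is to build $Q_n$ of degree $n/K$ for a large constant $K$ (so $|\p_{\pmb\tau}Q_n|\le C(n/K)^{2/\al}$) and take $g_n$ of degree $n(1-1/K)$, but you fixed both degrees at $\lfloor n/2\rfloor$ and never introduce this lever; as stated the comparison of the two summands fails for unfavorable constants. Finally, a minor but real error: you invoke \emph{Corollary 5.3} (an $L^p$ inequality) to produce a \emph{pointwise} bound on $\p_{\pmb\tau}Q_n$. For a pointwise bound you need the $L^\infty$ tangential Markov inequality, which is in \cite{Kroo}, not in Corollary 5.3; this is easy to repair but should be stated correctly.
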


\textbf{Proof.} Performing a proper affine map we can assume without the loss of generality that $e_d=(0,...0,1)\in\partial D$ is an exactly $C^{\alpha}$ point on the boundary and
$D\cap B^2(e_d,\delta)\subset B^\al(0,1)$ is the corresponding superscribed $\ell_{\alpha}$ ellipsoid. In addition,
there exists an inscribed $\ell_{\alpha}$ ellipsoid $E\subset D$ for which $e_d\in\partial E$. The existence of such inscribed ellipsoid $E$ is ensured by condition~(ii) of Definition~\ref{def-C2}. Then evidently $e_d$ must be the outer normal to $E$ at $e_d$ and it easily follows that
for any $(x,y)\in D\cap B(e_d,\delta), \;x=(x_1,...,x_{d-1})\in\RR^{d-1}, y\in \RR$ we have with proper $c_1, c_2>0$
\begin{equation}\label{2a}
c_1(|x_1|^{\alpha}+...+|x_{d-1}|^{\alpha})\leq 1-y\leq c_2(|x_1|^{\alpha}+...+|x_{d-1}|^{\alpha}).
\end{equation}
Consider the  polynomial
$$Q(x,y)=x_1J_n(y)g_n(x,y)\in \Pi^d_{(2b+1)n+1}, \;x=(x_1,...,x_{d-1})\in\RR^{d-1}, y\in \RR$$
 where $J_n(y)=J_n^{(\beta,\beta)}(y), y\in \RR$ is the $n$-th Jacobi polynomial with parameter $\beta$ to be specified below and we set $g_n(x,y):=\left(1-\frac{|x|^2-(1-y)^2}{T^2}\right)^{bn}$ where $T$ stands for the diameter of the domain $D$ and the integer $b\in \NN$ will be specified below. Note that since $e_d=(0,...0,1)\in\partial D$ it follows that $|g_n|\leq 1$ on $D$.

 Then by \cite[4.21.7, p.63]{sz},
 \begin{equation}\label{3}
J_n'(y)= (n/2+\beta+1/2)J_{n-1}^{(\beta+1,\beta+1)}(y).
\end{equation}

 Let us give a lower bound for $\int_D| \mathcal{D}_{n}Q|^p$. Clearly it follows from (\ref{2a}) the boundary $\partial D$ in $B(e_d,\delta)$ is given by the surface $y=f(x), \nabla f\in $ Lip$(\alpha-1)$ with
 \begin{equation}\label{3a}
1-c_2(|x_1|^{\alpha}+...+|x_{d-1}|^{\alpha})\leq f(x)\leq 1-c_1(|x_1|^{\alpha}+...+|x_{d-1}|^{\alpha}), \;\;\nabla f(0)=0.
\end{equation}
Moreover, since $(\nabla f,-1)$ is a normal to $\partial D$ it follows that the tangent plane is spanned by $e_j+\frac{\partial f}{\partial x_j}e_d, 1\leq j\leq d-1$. Therefore
setting $\partial^1g:= \frac{\partial g}{\partial x_1}+\frac{\partial f}{\partial x_1}\frac{\partial g}{\partial x_d}$ we have that $| \mathcal{D}_{n}g|\geq |\partial^1g|, \forall g.$ Furthermore
$$\partial^1Q(x,y)=\partial^1(x_1J_n(y))g_n(x,y)+x_1J_n(y)\partial^1g_n(x,y)=$$ $$\left(J_n(y)+\frac{\partial f}{\partial x_1}J_n'(y)x_1\right)g_n(x,y)+x_1J_n(y)\partial^1g_n(x,y).$$

  Set $D_a:=\{(x,y)\in D: 1-\frac{a}{n^2}\leq y\leq 1\}$ with $0<a<1$ to be properly chosen below. First let us note that in $D_a$ we have that $1-y\sim\frac{c}{n^2}, |x|\sim \frac{c}{n^{2/\alpha}}$ yielding $g_n\sim c, |\partial^1g_n|\leq cn.$ Moreover by (\ref{3a}) we have for any $(x,y)\in D_a, |x|\leq c_3(1-y)^{1/\alpha}\leq c_3\left(\frac{a}{n^2}\right)^{1/\alpha}$ and $\left|\frac{\partial f}{\partial x_1}x_1\right|\leq c|x|^{\alpha}$. Hence by the previous relation
  $$| \mathcal{D}_{n}Q|\geq |\partial^1Q| \geq c|J_n(y)|-c\left|\frac{\partial f}{\partial x_1}J_n'(y)x_1\right|-cn|x_1J_n(y)|\geq c|J_n(y)|-\frac{ca}{n^2}|J_n'(y)|-ca^{1/\alpha}|J_n(y)|.$$

  Now we need to recall that $\|J_n^{(\beta,\beta)}\|_{C[-1,1]}\sim n^{\beta}~\sim J_n^{(\beta,\beta)}(1).$ Therefore by the Markov inequality and relation (\ref{3}) it follows that uniformly with respect to $a\in [0, 1]$ and any $y\in [1-\frac{a}{n^2}, 1]$ we have
$$|J_n(y)|\sim n^{\beta}, \;\;\;|J'_n(y)|\sim n^{\beta+2}.$$
Hence we can properly choose $a>0$ so that the above estimate yields
   $$ |\mathcal{D}_{n}Q|\geq |\partial^1Q| \geq cn^{\beta}, \;\;\;(x,y)\in D_a.$$
   Thus we obtain the next lower bound for the integral of tangential derivative
    \begin{equation}\label{4a}
   \int_D| \mathcal{D}_{n}Q|^p\geq \int_{D_a}| \mathcal{D}_{n}Q|^p\geq cn^{\beta p}n^{-2+\frac{-2d+2}{\alpha}}.
  \end{equation}

  Next we need an upper bound for $\int_D|Q|^p$. Since  $0\leq g_n\leq 1, (x,y)\in B(e_d,\delta)$ it follows by (\ref{2a}) that
 $$\int_{D\cap B(e_d,\delta)}|Q|^p\leq \int_{D\cap B(e_d,\delta)}|x|^p|J_n(y)|^p\leq \int_{1-\delta}^1\int_{|x|\leq c_3(1-y)^{1/\alpha}}|x|^p|J_n(y)|^pdxdy$$
 $$\leq c \int_{1-\delta}^1(1-y)^{\frac{p+d-1}{\alpha}}|J_n(y)|^pdy.$$

  Now  we will apply asymptotic properties of the Jacobi polynomial $J_n=J_n^{(\beta,\beta)}(x), \beta>-1$ verified in \cite{sz}, (7.34.1), (7.34.4), p. 173. It is essentially shown there that
\begin{equation}\label{sz}
\int_{\delta}^1(1-x)^{\mu}|J_n(x)|^pdx\sim n^{-2\mu-2+\beta p}, \;\; 2\mu<\beta p-2+\frac{p}{2}.
\end{equation}
 (In fact, in \cite{sz} these asymptotic relations are verified for $p=1$ but they follow analogously for any $p\geq 1$.) Using this result with $\mu:=\frac{p+d-1}{\alpha}$ and any $\beta >2d+2$ we obtain from the previous estimate
  \begin{equation}\label{5a}
\int_{D\cap B^2(e_d,\delta)}|Q|^p\leq c n^{\frac{-2p-2d+2}{\alpha}-2+\beta p}.
\end{equation}

On the other hand using that $0\leq g_n\leq (1-\delta^2)^{bn}, (x,y)\in D\setminus B^2(e_d,\delta)$ and $|x_1J_n|\leq M^n, (x,y)\in D$ with some $M>0$ depending on the domain $D$ it follows that $|Q(x,y)|=|x_1J_n(y)g_n(x,y)|\leq (1-\delta^2)^{bn}M^n, (x,y)\in D\setminus B^2(e_d,\delta).$ Hence we can choose a proper
$b>0$ so that $|Q(x,y)|\leq \gamma^n, \;(x,y)\in D\setminus B^2(e_d,\delta)$ with some $0<\gamma<1$. Combining this observation with the upper bound (\ref{5a}) obviously implies that
\begin{equation}\label{6a}
\int_{D}|Q|^p\leq c n^{\frac{-2p-2d+2}{\alpha}-2+\beta p}.
\end{equation}

Finally, lower bound (\ref{4a}) together with the upper bound (\ref{6a}) yield that
$$\frac{\int_D| \mathcal{D}_{n}Q|^p}{\int_{D}|Q|^p}\geq cn^{\frac{2p}{\alpha}}.$$
Now taking the $p$-th root of the last estimate completes the proof of the theorem.

    \section{Marcinkiewicz type inequalities  on $C^\al$ domains: the case of $d=2$}

    Given a subspace $U\subset L^p(K)$ the Marcinkiewicz-Zygmund type problem for $1\leq p<\infty$ consists in finding discrete point sets $Y_N=\{x_1,...,x_N\}\subset K$ and corresponding positive weights $w_j>0, 1\leq j\leq N$  such that for any $g\in U$ we have
 \begin{equation}\label{mz}
c_1\sum_{1\leq j\leq N}w_j|g(x_j)|^p\leq \|g\|^p_{L^p(K)}\leq c_2\sum_{1\leq j\leq N}w_j|g(x
_j)|^p
 \end{equation}
with some constants $c_1, c_2>0$ depending only on $p,d$ and $K$. In case when $p=\infty$ the above relation is replaced by
$$\|g\|_{C(K)}\leq c\max_{1\leq j\leq N}|g(x_j)|,  \;\;g\in U.$$
  Evidently we must have $N\ge \dim U$ in order for these estimates to be possible for every $g\in U$.  These equivalence relations turned out to be an effective tool used for the discretization of the $L^p$ norms of trigonometric polynomials which is widely applied in the study of the convergence of Fourier series, Lagrange and Hermite interpolation, positive quadrature formulas, scattered data interpolation, see for instance \cite{l} for a survey on the univariate Marcinkiewicz-Zygmund type inequalities. Naturally it is crucial to find discrete point sets $Y_N=\{x_1,...,x_N\}\subset K$ of possibly minimal cardinality $N$. When $U=\Pi^d_n$, and the domain  $K\subset \RR^d$ has nonempty interior, we have dim $\Pi^d_n\sim n^d$ and therefore asymptotically optimal
discrete points sets for $\Pi^d_n$ must be of order $n^d$. For $p=\infty$ such discrete points sets are called optimal meshes. Some general assertions concerning the existence of proper discrete point sets of cardinality $\sim n^d\log^m n$ can be found in \cite{daietal} for $1\leq p\leq 2$ $(m=3)$\footnote{Note that if $g$ is an algebraic polynomial of degree $\le n/2$, then $g^2$ is an algebraic polynomial of degree $\le n$. Hence, if~\eqref{mz} is valid for algebraic polynomials of degree $\le n$ and some $p$, then~\eqref{mz} is valid for algebraic polynomials of degree $\le n/2$ and $2p$. Therefore, the Marcinkiewicz-Zygmund type inequalities for algebraic polynomials for $1\le p\le 2$ automatically extend to the full range $1\le p<\infty$ with the same order of dependence of $N$ on $n$.}, and \cite{bosetal} for $p=\infty$ ($m=d$).  However, besides involving extra log factors the above mentioned results do not provide \emph{explicit} construction for discrete point sets. On the other hand, an  application of tangential Bernstein type inequalities makes it possible to give \emph{explicit} construction of discrete point sets of \emph{asymptotically optimal cardinality}. This approach was used in \cite{Kroo1} in case when $p=\infty$ where existence of optimal meshes was verified for any $C^\al, 2-\frac{2}{d}<\al\leq 2$ domain. In addition, tangential Bernstein type inequalities were applied in  \cite{DaiPry} in order to verify $L^p, 1\leq p<\infty$ Marcinkiewicz type inequalities for $\Pi^d_n$ with $\sim n^d$ points in $C^2$ domains. The aim of the last two sections of this paper is to apply the tangential $L^p$ Bernstein-Markov inequalities verified on general $C^\al$ domains in the previous sections in order to provide asymptotically optimal Marcinkiewicz type inequalities for  $C^\al, 2-\frac{2}{d}<\al<2$ domains.

In this section we will prove Theorem \ref{thm-1-2A} for  the case $d=2$, which we reformulate as follows.

    \begin{thm} \label{thm-1-1:MZ}If  $\Og\subset \RR^2$ is a compact  $C^{\alpha}$- domain with $1<\alpha\leq 2$, then  for any positive integer $n$, there exists a partition $\Og=\cup_{1\leq j\leq N}\Og_j$  of $\Og$ with $N\leq cn^2$ such  that for every $(x_j, y_j)\in \Og_j$, each  $f\in  \Pi^2_n$ and $1\leq p<\infty$,  we have
    $$\frac{1}{2}\sum_{j=1}^N |\Og_j||f(x_j, y_j)|^p\leq \iint_{\Og}|f(x,y)|^pdxdy\leq 2\sum_{j=1}^N|\Og_j||f(x_j, y_j)|^p.$$
    \end{thm}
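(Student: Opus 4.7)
The plan is to construct an explicit partition $\Omega=\bigcup_{j=1}^{N}\Omega_{j}$ with $N\le cn^{2}$ whose pieces are anisotropic curvilinear rectangles adapted to both the boundary geometry of $\Omega$ and the $L^{p}$ Bernstein--Markov inequalities for $\Pi_{n}^{2}$, and then to show that the oscillation of $|f|^{p}$ on each piece, measured in an integrated sense, can be made so small that summing up gives the stated two-sided estimate. First I would cover $\partial\Omega$ by finitely many boundary patches on which, after a rigid motion, the boundary is a graph $y=g(x)$ with $g\in C^{\alpha}$, as in Section~5; away from a fixed neighbourhood of $\partial\Omega$ I would use standard axis-aligned squares of side $c/n$, which contributes $O(n^{2})$ pieces and is handled by the usual $L^{p}$ Bernstein--Markov inequalities for algebraic polynomials. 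All the new work happens in the boundary strip.

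Inside a boundary patch I would foliate by depth: strips $A_{k}=\{\xi\in\Omega : 2^{-k-1}\le \dist(\xi,\partial\Omega)\le 2^{-k}\}$ for $k=0,1,\dots,K$ with $2^{-K}\sim n^{-2}$, together with the innermost layer $\{\dist(\xi,\partial\Omega)\le c/n^{2}\}$. Inside $A_{k}$, I would further subdivide in the tangential direction into curvilinear rectangles whose normal extent equals the strip thickness $2^{-k}$ (resp.\ $\sim n^{-2}$ for the innermost layer) and whose tangential extent is comparable to $\bigl(\sqrt{2^{-k}}+n^{-1}\bigr)^{2/\alpha-1}/n$, the scale dictated by Theorem~\ref{thm-5-2}. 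The count inside $A_{k}$ is of order $n\bigl(\sqrt{2^{-k}}+n^{-1}\bigr)^{1-2/\alpha}$; since $1/\alpha>1/2$ the resulting geometric series is dominated by the innermost layer and sums to $\sim n\cdot n^{2/\alpha-1}=n^{2/\alpha}\le n^{2}$, so $N\le cn^{2}$.

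The core step is to prove that for a suitable small constant $\eta>0$ (which will ultimately be fixed by the factor $2$ in the theorem), and for every $f\in\Pi_{n}^{2}$, each piece satisfies the small-oscillation bound
\[
\int_{\Omega_{j}} \bigl||f(\xi)|^{p}-|f(x_{j},y_{j})|^{p}\bigr|\,d\xi \;\le\; \eta \int_{\widetilde\Omega_{j}} |f(\xi)|^{p}\, d\xi
\]
for any choice of $(x_{j},y_{j})\in\Omega_{j}$, where $\widetilde\Omega_{j}\supset\Omega_{j}$ is a controlled enlargement whose family has bounded overlap. Writing $|f|^{p}-|f(x_{j},y_{j})|^{p}$ as an integral of $\nabla(|f|^{p})=p|f|^{p-2}f\nabla f$ along short curves inside $\Omega_{j}$, one splits the contribution into a normal part (controlled by the one-dimensional weighted Bernstein inequality~\eqref{eqn:1d-weighted-bern} applied on each fibre, exactly as in the normal Markov estimate in Section~2) and a tangential part, controlled by Theorem~\ref{thm-5-2}: the tile's tangential length was chosen precisely so that the weight $(\varphi_{n,\Gamma})^{2\gamma}/n$ absorbs it, and picking $\mu$ large enough once and for all forces the max-operator $\mathcal{D}_{n,\mu}f$ in~\eqref{eqn:D max def} to dominate the actual tangential derivative of $f$ throughout $\Omega_{j}$ (this is where the definition of $\mathcal{D}_{n,\mu}$ as a maximum over nearby boundary points pays off). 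Summing the displayed oscillation estimate over $j$ and using bounded overlap of $\{\widetilde\Omega_{j}\}$ yields
\[
\Bigl|\iint_{\Omega}|f|^{p}\,dxdy-\sum_{j=1}^{N}|\Omega_{j}|\,|f(x_{j},y_{j})|^{p}\Bigr| \le C\eta \iint_{\Omega}|f|^{p}\,dxdy,
\]
and choosing $\eta$ so that $C\eta\le 1/2$ delivers both inequalities of the theorem simultaneously.

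The main obstacle will be the tangential direction. Unlike the normal direction, where one has genuinely pointwise one-dimensional Bernstein bounds, Theorem~\ref{thm-5-2} is only an $L^{p}$-integrated tangential estimate, so the usual ``oscillation $\lesssim$ length $\times$ sup-norm of derivative'' argument on a single tile is not directly available. The bookkeeping therefore has to be done in an integrated fashion: one integrates $|\partial_{\tau}f|$ along curves parallel to the boundary inside each tile, bounds the resulting $L^{p}(\Omega_{j})$ expression against $\mathcal{D}_{n,\mu}f$, and then invokes Theorem~\ref{thm-5-2} after summation, with the doubling properties of $(\varphi_{n,\Gamma})^{2\gamma}$ ensuring that the weights match up. This is the same pattern used in Section~7 of \cite{DaiPry} for the $C^{2}$ case; the present setting differs only in the different scaling $d^{1/\alpha-1/2}/n$ of the tangential side length and in the range of $\alpha$ for which the counting argument still returns $O(n^{2})$, which is exactly the restriction $2-2/d<\alpha$ mentioned in the introduction (trivially satisfied here since $d=2$).
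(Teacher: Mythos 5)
Your overall framework—reduce to graph patches, layer by distance to the boundary, subdivide tangentially at the scale $(\sqrt{\dist}+n^{-1})^{2/\alpha-1}/n$ dictated by \cref{thm-5-2}, and bound the integrated oscillation by splitting into a normal and a tangential contribution—is exactly the paper's strategy, but there is a serious flaw in the choice of the depth layers. You take dyadic strips $A_k=\{2^{-k-1}\le\dist\le 2^{-k}\}$, $k=0,\dots,K$, $2^{-K}\sim n^{-2}$, and you take each tile to have normal extent equal to the full strip thickness $2^{-k}$. That is far too coarse. By the one-dimensional weighted Bernstein inequality, the normal derivative of $f\in\Pi_n^2$ at depth $\delta$ is of size $n/\sqrt{\delta}$ in the appropriate sense, so over a normal run of length $2^{-k}\sim\delta$ the function $f$ can oscillate by a factor of order $n\sqrt{\delta}$. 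For all but the innermost $O(1)$ strips this is $\gg 1$, and summing $|\Omega_j|\cdot\osc(f;\Omega_j)^p$ over the strips produces a bound of order $n^p\|f\|_p^p$, not $\eps^p\|f\|_p^p$. The two-sided Marcinkiewicz estimate therefore fails for your partition; this is not a bookkeeping issue that the ``integrated oscillation'' viewpoint rescues.

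The correct normal mesh size at depth $\delta$ is $\sim\sqrt{\delta}/n$, not $\delta$. The paper encodes this by taking depth cuts at $z_j=j^2/(4m^2)$, $j=0,\dots,m$, with $m\sim n/\eps$, so that there are $\sim n$ depth levels (not $\sim\log n$) and the $j$-th band has thickness $z_j-z_{j-1}\sim j/m^2\sim\sqrt{z_j}/m$. With $N_j\sim m(m/j)^{2\gamma}$ tangential pieces on the $j$-th level, $\gamma=1/\alpha-1/2$, the total count is $\sum_j N_j\sim m^{2\gamma+1}\sum_{j=1}^m j^{-2\gamma}\sim m^2\sim n^2$, using precisely $2\gamma<1$, i.e.\ $\alpha>1$. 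Your cardinality estimate $n^{2/\alpha}$ for the boundary strip is an artifact of the missing normal refinement; once you split each $A_k$ into $\sim n\sqrt{2^{-k}}$ normal sublayers you recover a convergent geometric series totaling $O(n^2)$ and, in effect, the paper's grid. With that repair the remainder of your sketch (using \cref{thm-3-2}/\cref{thm-5-2} for the tangential variation, a one-dimensional fibre-wise estimate à la \eqref{eqn:1d-weighted-bern} for the normal variation, and the discrete-to-integral transfer lemma, i.e.\ \cref{lem-3-1:bern}) lines up with the proof of \cref{prop-1-2}.
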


\begin{rem}
    For $\al=1$,  the stated  result with $N\leq c n^2 \log n$ remains true, as can be seen from the proof below.
\end{rem}

    Since general  $C^{\alpha}$ domains can be covered  by $C^{\alpha}$ domains of special type, it suffices to consider  domains of special type.

     Let   $g: [-2,2]\to \RR$  be   a continuously differentiable function on $[-2,2]$ satisfying       $$|g'(x+t)-g'(x)|\leq L |t|^{\al-1}\   \   \text{ whenever $x, x+t \in [-2,2]$},$$
     where $L>1$ is a constant.
    Define  \begin{align*}
    G:&=\Bl\{(x,y):\  \ 0\leq x\leq 1,\   \ g(x)-\f 14\leq y \leq g(x)\Br\},\end{align*}and
    \begin{align*}
    G_\ast:&=\Bl\{(x,y):\  \ -2\leq x\leq 2, \   \  g(x)-2\leq y \leq g(x)\Br\}.
        \end{align*}
        For $(x,y)\in G_\ast$, we set $\da(x,y):=g(x)-y$ and
        $$  \da_n(x,y)=\da(x,y)+\f 1{n^2}=g(x)-y +\f 1 {n^2}.$$
    As verified in Theorem 3.1 for any   $f\in\Pi_n^2$ and $0<p<\infty$,
    \begin{equation}\label{1-1-Berns}
    \Bl(\int_0^1 \int_{g(x)-1}^{g(x)}   (\da_n(x,y))^{\ga p} | \p_{\tau_x} f(x,y)|^p\, dydx\Br)^{1/p} \leq C_p L^{\f 1 \al}  n \|f\|_{L^p(G_\ast)},
    \end{equation}
    where
    $\ga:=\f 1 \al-\f12$, and   $\tau_x:=(1, g'(x))$.

    To formulate our main proposition, we define, for a bounded function $f$ on $G_\ast$  and  a set $A\subset G_\ast$,
    \[ \osc(f; A) :=\sup_{\xi, \xi'\in A} |f(\xi)-f(\xi')|.\]
    Using  the Bernstein inequality \eqref{1-1-Berns}, we may
    obtain

    \begin{prop}\label{prop-1-2}  For  any $0<\epsilon<1$, $1<\al\leq 2$ and  $n\in\NN$,  there exists a partition  $G=\cup_{j=1}^N G_j$  with $N\leq c_\va n^2$ so that for every $f\in  \Pi^2_n$ and $1\leq p<\infty$,
        \begin{equation}\label{7-3-a}
    \sum_{j=1}^N |G_j| | \osc(f; G_j)|^p\leq \epsilon^p \iint_{G_*}|f(x,y)|^pdxdy.
        \end{equation}
    \end{prop}

    Using  the argument of  \cite{DaiPry},
        Theorem \ref{thm-1-1:MZ}  follows directly from Proposition  \ref{prop-1-2}. 
         Indeed, following the proof of Lemma 7.5 of  \cite{DaiPry}, and using  \eqref{7-3-a} and the fact that $\Og$ can be covered by finitely many domains of special type, we can find a partition  $\Og=\cup_{j=1}^N \Og_j$  with $N\leq c_\va n^2$ so that for every $f\in  \Pi^2_n$ and $1\leq p<\infty$,
         \begin{equation}\label{7-3-a}
         	\sum_{j=1}^N |\Og_j| | \osc(f; \Og_j)|^p\leq  C(\og) \epsilon^p \iint_{\Og}|f(x,y)|^pdxdy.
         \end{equation}
   We then use  the following elementary inequality,
         \[ |a^p-b^p|\leq C_p \da^{1-p} |a-b|^p +\da b^p,\    \ \forall a, b>0,\   \ \da\in (0, 1),\]
         and obtain that for any $(x_j, y_j)\in G_j$ and any $f\in \Pi_n^2$,  
         \begin{align*}
        & \Bl|	\sum_{j=1}^N  |f(x_j, y_j)|^p |\Og_j|  - \iint_{\Og} |f(x,y)|^p\, dxdy \Br| \\
         &\leq C_p \eps^{1-p} \sum_{j=1}^N  \iint_{\Og_j}   |f(x_j, y_j) -f(x,y)|^pdxdy+ \eps  \iint_\Og |f(x,y)|^p\, dxdy\\
         &\leq C_p(\Og) \va  \iint_\Og |f(x,y)|^p\, dxdy.
         \end{align*} 
      Theorem \ref{thm-1-1:MZ} then follows by choosing $\va\in (0, 1)$ so that    $ C_p(\Og) \va\leq \f12$.

        The rest of this note is devoted to the proof of  Proposition \ref{prop-1-2}.
            We   need the following lemma  for algebraic polynomials of one variable.

    \begin{lem}\label{lem-3-1:bern}
    Let $\be \ge -\f12$ and let  $$x_j:=\f {j^2} {4m^2},\   \   \ \text{  $j=0,1,\cdots, m$. }$$
    Then for every  $f\in\Pi_n^1$ with $n\leq m$,  and each  $1\leq p<\infty$, we have
    \begin{align*}
    &   \Bl(\f 1m \sum_{ j=1}^{m}\bigl( x_j^{\beta+\f12}   +m^{-1} \bigr) \max_{x\in [x_{j-1},x_j]} |f(x )|^p\Br)^{\f1p}\leq C \Bl(\int_{0}^{1} |f(x)|^p x^{\beta}\, dx\Br)^{\f1p},
    \end{align*}
    where $C>0$ is a constant depending only on $\beta$.
\end{lem}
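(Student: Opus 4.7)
The plan is to reduce this discrete weighted inequality to the Mastroianni--Totik Marcinkiewicz--Zygmund inequality for doubling weights, via the substitution $t=\sqrt{x}$. Under this change of variables the nodes $x_j=j^2/(4m^2)$ become equally spaced nodes $t_j=j/(2m)\in[0,1/2]$ with common spacing $|J_j|:=|[t_{j-1},t_j]|=1/(2m)$, and $f\in\Pi_n^1$ lifts to $g(t):=f(t^2)\in\Pi_{2n}^1$, of degree at most $2n\le 2m$. A short calculation shows that the right-hand side transforms (up to an absolute factor) into $\int_0^1|g(t)|^p\,t^{2\beta+1}\,dt$, and that the discrete coefficients become $(t_j^{2\beta+1}+m^{-1})/m$. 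Since $\beta\ge-1/2$, the function $w(t)=t^{2\beta+1}$ is a nonnegative doubling weight on $[0,1]$.

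Next I would extend $g$ to $[-1,1]$ by evenness (still a polynomial of degree $\le 2m$) and introduce the auxiliary weight $W(t):=|t|^{2\beta+1}+m^{-1}$. Two facts are needed. First, $W$ is a doubling weight on $[-1,1]$ with doubling constants independent of $m$; this follows from a dyadic splitting around the critical scale where $|t|^{2\beta+1}$ and $m^{-1}$ balance. Second, since $|J_j|=1/(2m)$, elementary estimates give $W(J_j)=\int_{J_j}W(t)\,dt\sim(t_j^{2\beta+1}+m^{-1})/m$, exactly matching the discrete coefficients. Because $|J_j|$ is comparable to the Chebyshev-type mesh $\varphi_{2m}(t)=\sqrt{1-t^2}/(2m)+1/(2m)^2$ uniformly on $[-1/2,1/2]$, the doubling-weight Marcinkiewicz--Zygmund inequality of~\cite{MT2} (see also~\cite{Er}) applied to $g\in\Pi_{2m}^1$ with weight $W$ yields
\[
\sum_{j=1}^m W(J_j)\max_{J_j}|g|^p\le C\int_{-1}^1|g(t)|^p\,W(t)\,dt=2C\int_0^1|g(t)|^p\,W(t)\,dt,
\]
the last equality by evenness of $g$.

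The final step is to replace $W$ by $t^{2\beta+1}$ on the right, i.e., to absorb $m^{-1}\int_0^1|g(t)|^p\,dt$ into $\int_0^1|g(t)|^p\,t^{2\beta+1}\,dt$. On $\{t\ge 1/m\}$ one has $t^{2\beta+1}\ge m^{-(2\beta+1)}$, so the issue only concerns the region $\{t<1/m\}$; there I would decompose dyadically and apply a local Nikolskii-type estimate for polynomials of degree $\le 2m$ on intervals of length $\sim 1/m$, exploiting that the Christoffel function of $w$ near $t=0$ has size $\sim m^{-(2\beta+2)}$ and that the factor $m^{-1}$ compensates for the excess. Reverting the substitution $x=t^2$ then yields the claimed inequality. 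The main obstacle is precisely this last step: the comparison between $W$ and $t^{2\beta+1}$ at the polynomial level is delicate, and one must use the degree restriction $n\le m$ to prevent $g$ from oscillating on scales finer than the weight's characteristic length $1/m$, while tracking all doubling constants so they remain independent of $m$.
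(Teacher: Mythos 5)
Your plan follows essentially the same route as the paper's proof: a square-root-type change of variables that equalizes the node spacing ($t=\sqrt x$ with even extension in your case; $x=\sin^2(\theta/2)$ and an even trigonometric polynomial $T_n(\theta)=f(\sin^2(\theta/2))$ in the paper), a Marcinkiewicz--Zygmund bound against the doubling weight $W$, and then a Schur-type comparison to pass from $W$ to the unperturbed weight. The paper derives the MZ bound directly from $\max_{I_j}|T_n|^p \lesssim m^{-(p-1)}\int_{I_j}|T_n'|^p + m\int_{I_j}|T_n|^p$ combined with the weighted trigonometric Bernstein and Schur inequalities of MT2, while you cite the MZ inequality of MT2 wholesale; that difference is cosmetic.

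The step you single out as the main obstacle is, however, more than delicate: the absorption $m^{-1}\int_0^1 |g|^p\,dt \le C\int_0^1 |g|^p\, t^{2\beta+1}\,dt$ for $g\in\Pi_{2m}^1$ is false once $\beta>\tfrac12$. Test it with $g(t)=(1-t^2)^m$, i.e.\ $f(x)=(1-x)^m$, and $p=1$: then $m^{-1}\int_0^1 g\,dt\sim m^{-3/2}$, while $\int_0^1 g\,t^{2\beta+1}\,dt=\tfrac12 B(\beta+1,m+1)\sim m^{-(\beta+1)}$, so the ratio grows like $m^{\beta-1/2}$. The Christoffel size $m^{-(2\beta+2)}$ you invoke only makes the weighted mass near the origin smaller, so the factor $m^{-1}$ does not compensate when $\beta>0$. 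The same test function shows that the inequality as stated in the lemma, with $+m^{-1}$ in the discrete weight, cannot hold with a constant independent of $m$ once $\beta>\tfrac12$; the additive term that local averaging of $x^\beta$ over the first cell $[x_0,x_1]$ actually produces is $m^{-(2\beta+1)}$, and with that replacement your Schur step (and the paper's) does close. In the one place the lemma is applied, the proof of Proposition 7.2, only the $\tfrac1m x_j^{\beta+1/2}$ part of the weight is used, so nothing downstream is affected, but as written the final absorption in your sketch cannot be carried out.
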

\begin{proof} This lemma   can be obtained by a slight modification of the proof of Theorem~3.1 of \cite{MT2}.
Write $$x_j=\sin^2\f {\ta_j}2,\   \   \text{where}\   \  \ta_j\in [0, \pi/3],\   \  j=0,1,\cdots, m.$$  For $f\in \Pi_n^1$, let
    $$T_n(\ta) =f(\sin^2\f \ta 2) =f\Bl( \f {1-\cos \ta}2\Br),\   \ \ta\in\RR.$$
    Then $T_n$ is an even  trigonometric polynomial of degree at most $n$, and $$\f 1{m}\leq \ta_j-\ta_{j-1}\leq  \f {2}m,\   \   \  \text{ $j=1,2,\cdots, m$.}$$

Now setting  $I_j=[\ta_{j-1}, \ta_j]$, we have that   for each $\ta\in I_j$,
    $$ |T_n(\ta)| \leq \int_{I_j} |T_n'(s)|\, ds +\f 1 {|I_j|} \int_{I_j} |T_n(s)|\, ds,$$
    which implies
    \begin{align*}
    \max_{\ta \in I_j} |T_n(\ta)|^p & \leq  C^p m^{-(p-1)} \int_{I_j} |T_n'(s)|^p\, ds+ C^p m \int_{I_j} |T_n(s)|^p\, ds.
    \end{align*}
    Since
    \[  \sin \f {\ta_j}2 \leq \min_{\ta \in I_j} \sin \f \ta 2  +  m^{-1},\  \ 1\leq j\leq m, \]
    it follows that
    \begin{align*}
    &   \Bl(\f 1m \sum_{ j=1}^{m}\bigl( x_j^{\beta+\f12}   +m^{-1} \bigr) \max_{x\in [x_{j-1},x_j]} |f(x )|^p\Br)^{\f1p}=
    \left(\f 1m \sum_{ j=1}^{m}\Bl[ \Bl(  \sin \f {\ta_j}2 \Br)^{2\be+1}  +\f1{m} \Br] \max_{ \ta\in I_j} |T_n(\ta)|^p\right)^{1/p}\\
    &\leq C_\be (S_1+S_2),
    \end{align*}
    where
    \begin{align*}
    S_1:&= m^{-1} \left(\int_{0}^{\pi/3}|T_n'(\ta)|^p \Bl[\Bl(\sin \f \ta 2\Br)^{2\be+1} +\f 1m  \Br]\, d\ta\right)^{1/p},\\
    S_2:&=  \left(\int_0^{\pi/3}  |T_n(\ta)|^p \Bl[\Bl( \sin \f \ta 2\Br)^{2\be+1} +\f 1m \Br]\, d\ta\right)^{1/p}.
    \end{align*}
    For the integral  $S_1$,  we use   the  weighted Bernstein inequality for trigonometric polynomials (see \cite[Theorem 4.1]{MT2}) to obtain
    \begin{align*}
    S_1     &\leq  m^{-1} \left(\f 2 {\sqrt{3}}\int_{0}^{\pi/3}|T_n'(\ta)|^p \Bl[\Bl(\sin \f \ta 2\Br)^{2\be+1}\cos \f \ta 2 +\f 1m  \Br]\, d\ta\right)^{1/p}\\
    &\leq   C \left(\int_{0}^\pi |T_n(\ta)|^p \Bl[\Bl(\sin \f \ta 2\Br)^{2\be+1}\cos \f \ta 2 +\f 1m  \Br]\, d\ta\right)^{1/p},
    \end{align*}
which, using the  Schur-type inequality for trigonometric
polynomials (see \cite[(3.3)]{MT2}),  is estimated above by
    \begin{align*}
     C \Bl(\int_0^\pi  |T_n(\ta)|^p \Bl(\sin \f \ta 2\Br)^{2\be+1} \cos\f \ta 2 \, d\ta\Br)^{1/p}=C\Bl( \int_{0}^1 |f(x)|^p  x^\be  dx\Br)^{1/p}.
    \end{align*}

The integral  $S_2$ can be estimated similarly:
    \begin{align*}
    S_2\leq  \Bl(\f 2 {\sqrt{3}}\int_0^{\pi/3}  |T_n(\ta)|^p \Bl[\Bl(\sin \f \ta 2\Br)^{2\be+1}\cos \f \ta 2 +\f 1m  \Br]  \, d\ta\Br)^{1/p}\leq C\Bl( \int_{0}^1 |f(x)|^p  x^\be  dx\Br)^{1/p}.
    \end{align*}

    Putting the above together, we deduce the inequality stated in the lemma.
\end{proof}

We are now in a position to prove  Proposition \ref{prop-1-2}.

\begin{proof}[Proof of Proposition \ref{prop-1-2}] Let $m>2n$ be an integer such that $\f 1m \sim \f \va n$.  Let   $z_j:= \f {j^2} {4m^2}$ for  $ j=0,1,\dots, m$, and let   $
    x_{i,j}:=
    \f {i}{N_j}$ for $1\leq j\leq m$ and $0\leq i \leq N_j,$
     where $N_j$ is  an integer $\ge n$  to be specified later. We then define a partition $G=\bigcup_{j=1}^m \bigcup_{i=1}^{N_j} I_{i,j}$  as follows:
    \begin{align*}
    I_{i,j}:&=\Bl\{ (x,y)\in G:\  \   x_{i-1,j}\leq  x\leq x_{i,j},\   \  z_{j-1}\leq  g(x)-y \leq z_{j}\Br\},\\
    &\    \    \    \   \  \hspace{5mm}  j=1,2,\cdots, m,\  \ i=1,2,\cdots, N_j.
    \end{align*}
        Note that
    \begin{equation}\label{3-6:bern}
    |I_{i,j}|=(x_{i,j}-x_{i-1,j}) (z_j-z_{j-1})= \f {2j-1}{4m^2N_j},\  \  1\leq i\leq N_j,\   \   1\leq j\leq m.
    \end{equation}
    Our aim is to show that
        \begin{equation}\label{1-4}
    \sum_{j=1}^m \sum_{i=1}^{N_j}  |I_{i,j}| | \osc(f; I_{i,j})|^p\leq \epsilon^p \iint_{G_*}|f(x,y)|^pdxdy,\   \   \ f\in \Pi_n^2.
    \end{equation}

    To show \eqref{1-4}, we define  $F(x,z):= f(x, g(x)-z)$ for $-2\leq x\leq 2$ and $z\in\RR$.
    Then $f(x,y)=F(x, g(x)-y)$, and
    \begin{align*}
    \osc (f; I_{i,j}) &:=\max_{ \sub{ x,x'\in [x_{i-1,j}  , x_{i,j}]\\
            z,z' \in [z_{j-1}, z_{j}]}} |F(x,z)-F(x',z')|\\
    &\leq 2 \sup_{z \in [z_{j-1}, z_{j}]}  \sup_{x\in [x_{i-1,j}, x_{i,j}]} \Bl|F(x, z) -N_j\int_{x_{i-1,j}}^{x_{i,j}}F(u, z_j)\, du\Br|\\
    &   \leq 2 \Bl[ a_{i,j} (f) +b_{i,j}(f)\Br],
    \end{align*}
    where
    \begin{align*}
    a_{i,j}(f):&=     \sup_{\sub{x\in [x_{i-1,j}, x_{i,j}]\\
            z \in [z_{j-1}, z_{j}]}} \Bl|F(x, z) -N_j \int_{x_{i-1,j}}^{x_{i,j} }F(u, z)\, du\Br|,\\
    b_{i,j}(f)&:=N_j \sup_{z\in [z_{j-1}, z_{j}]}\Bl| \int_{x_{i-1,j}}^{x_{i,j} }[F(u, z)-F(u, z_j)]\, du\Br|.
    \end{align*}
    It follows that
    \begin{align*}
        \sum_{j=1}^m \sum_{i=1}^{N_j}  |I_{i,j} | | \osc(f;  I_{i,j})|^p\leq 4^p \Bl( \Sigma_1+\Sigma_2\Br),
    \end{align*}
where
    \begin{align*}
    \Sigma_1&:= \sum_{j=1}^{m}  \sum_{i=1}^{N_j} |I_{i,j}| \bl| a_{i,j}(f)\br|^p\  \ \text{and}\  \
    \Sigma_2:=\sum_{j=1}^{m} \sum_{i=1}^{N_j}  |I_{i,j}|   |b_{i,j} (f)|^p.
    \end{align*}

To estimate the sums $\Sigma_1$ and $\Sigma_2$,  we  first note that
    \begin{align}
    |a_{i,j} (f)|^p&\leq
\Bl(    \int_{x_{i-1,j}}^{x_{i,j}} \sup_{z \in [z_{j-1}, z_{j}]}|\p_1 F(v, z)| dv\Br)^{p}\leq
N_j^{1-p}   \int_{x_{i-1,j}}^{x_{i,j}} \sup_{z \in [z_{j-1}, z_{j}]}|\p_1 F(v, z)|^p dv,\label{1-5} \\
    |b_{i,j}(f)|^p &\leq \Bl(  N_j \int_{z_{j-1}}^{ z_{j}} \int_{x_{i-1,j}}^{x_{i,j}}|\p_2  F(u, z)|\, du dz\Br)^p\notag\\
    &\leq N_j \Bl( \f j  {4m^2}\Br)^{p-1} \int_{z_{j-1}}^{ z_{j}} \int_{x_{i-1,j}}^{x_{i,j}}|\p_2  F(u, z)|^p\, du dz.\label{1-6}
    \end{align}

For the sum $\Sigma_1$, using ~\eqref{3-6:bern} and \eqref{1-5},  we have
    \begin{align}
    \Sigma_1
    &\leq \f {C}{ m^2 } \sum_{j=1}^{m} \sum_{i=1}^{N_j}  \int_{x_{i-1,j}}^{x_{i,j}}  j N_j^{-p}
    \sup_{z \in [z_{j-1}, z_{j}]} |\p_1 F(v, z)|^pdv\notag\\
    &=\f {C}{ m^2 } \int_0^1 \Bl[ \sum_{j=1}^{m}    jN_j^{-p}
    \sup_{z \in [z_{j-1}, z_{j}]} |\p_1 F(v, z)|^p\Br] dv.\label{1-7-0}
    \end{align}
Now we choose $N_j\in\NN$ so  that $$m\leq N_j \sim   m\Bl( \f mj\Br)^{2\ga}\sim \f m {z_j^\ga},$$
where $\ga=\f 1\al-\f12$.
On one hand, since  $1<\al\leq 2$ and $m\sim n$,  we have $2\ga=\f 2 \al -1<1$, and  the number of sets in the partition $G=\bigcup_{i,j} I_{i,j}$ equals to \footnote{This is the only place where the condition  $\al>1$ is required. In the case of $\al=1$,  we have  $2\ga=1$ and $\sum_{j=1}^m N_j\sim n^2\log n$.}
$$\sum_{j=1}^m N_j \leq C m^{2\ga+1} \sum_{j=1}^m j^{-2\ga} \sim m^2\sim n^2.$$
On the other hand,  since   the function $$\p_1 F(v,z)=(\p_1+g'(v)\p_2) f(v, g(v)-z)=(\p_{\tau_v} f )(v, g(v)-z)$$ is an algebraic polynomial of degree at most $n$ in the variable $z$ for each fixed $v\in [-2,2]$, we obtain from Lemma~\ref{lem-3-1:bern} with $\be=\ga p$  that
    for each $v\in [0, 1]$,
    \begin{align*}
&\sum_{j=1}^m   \f 1 {m^2}  j N_j^{-p}
\sup_{z \in [z_{j-1}, z_{j}]} |\p_1 F(v, z)|^p\\
&\leq C^p\f 1{m^p}\sum_{j=1}^m \f {z_j^ {\f 12 +\ga p}}m \sup_{z \in [z_{j-1}, z_{j}]} |\p_1 F(v, z)|^p
\leq C^p \f 1{m^p} \int_0^{1}z^{\ga p}  |\p_1 F(v, z)|^p\, dz\\
&= C^p \f 1{m^p} \int_0^{1}z^{\ga p}  \Bl|(\p_{\tau_v} f )(v, g(v)-z)\Br|^p\, dz =
C^p \f 1{m^p} \int_{g(v)-1}^{g(v)} (g(v)-y)^{\ga p}  \Bl|\p_{\tau_v} f (v, y)\Br|^p\, dy.
    \end{align*}
This together with \eqref{1-7-0} and  the Bernstein inequality \eqref{1-1-Berns}  implies
        \begin{align*}
    \Sigma_1\leq C^p m^{-p}  \int_{0}^1 \int_{g(x)-1}^{g(x)}  |\p_{\tau_x} f(x, y)|^p \da(x,y) ^{\ga p} \, dy\, dx\leq C^p\Bl(\f {n}m\Br)^p  \|f\|_{L^p(G_\ast)}^p. 
    \end{align*}

Finally, for the sum $\Sigma_2$,   using~\eqref{1-6} and~\eqref{3-6:bern}, we obtain
    \begin{align*}
    \Sigma_2
    &\leq  C ^p  \sum_{j=1}^m \sum_{i=1}^{N_j}  \Bl( \f {j} {m^2} \Br)^p   \int_{x_{i-1,j}}^{x_{i,j}}  \int_{z_{j-1}}^{z_{j}}  |\p_2 F(u,z)|^p  dz\, du= \f{C ^p }{m^p}  \sum_{j=1}^m   \Bl( \f {j} {m} \Br)^p   \int_{0}^{1}  \int_{z_{j-1}}^{z_{j}}  |\p_2 F(u,z)|^p  dz\, du \\
        &\leq  \f {C^p} {m^p}  \int_{0}^1\Bl[\sum_{j=1}^m \int_{z_{j-1}}^{z_{j}}  |\p_2 F(u,z)|^p \Bl(\sqrt{z} +\f 1 {m}\Br)^p\, dz\Br]\, du=\f {C^p} {m^p}  \int_{0}^1\Bl[ \int_{0}^{1/4}  |\p_2 F(u,z)|^p \Bl(\sqrt{z} +\f 1 {m}\Br)^p\, dz\Br]\, du\\
    &\leq  C^p m^{-p}  \int_{0}^1 \Bl[ \int_0^{1/4} |\p_2 F(u,z)|^p (\sqrt{z} )^p \, dz\, du+C^pm^{-2p}  \int_{0}^1 \int_0^{1/4} |\p_2 F(u,z)|^p  \, dz\Br]\, du.\end{align*}
    Since $F(u, z)=f(u, g(u)-z)$ is an algebraic polynomial of the variable $z$ of degree at most $n$,
    using   the univariate  Markov-Bernstein-type  inequality  (\cite[Theorem~7.3]{MT2} ),  we obtain
    \begin{align*}
\Sigma_2\leq    &  C^p\Bl(\f nm\Br)^p  \int_{0}^1  \int_0^2 | F(u,z)|^p \, dz\, du \leq C^p \Bl(\f nm\Br)^p \|f\|_{L^p(G_\ast)}^p.\label{3-9:bern}
    \end{align*}

Putting the above together, and taking into account the fact that
$m\sim \f n\va$, we obtain
    \begin{align*}
\sum_{j=1}^m \sum_{i=1}^{N_j}  |I_{i,j} | | \osc(f;  I_{i,j})|^p&\leq 4^p \Bl( \Sigma_1+\Sigma_2\Br)\leq C^p\Bl(\f {n}m\Br)^p  \|f\|_{L^p(G_\ast)}^p\leq \va^p  \|f\|_{L^p(G_\ast)}^p.
\end{align*}
\end{proof}

\section{Marcinkiewicz type inequalities  on $C^\al$ domains: Higher-dimensional case}

In this section, we assume  $d\ge 3$.  The main goal of this last part of the paper is to prove  Theorem \ref{thm-1-2A} for $d\ge 3$,  namely,  the following Marcinkiewicz type inequalities.

\begin{thm} \label{thm-2-1:MZ}If  $\Og\subset \RR^d$ is a compact  $C^{\alpha}$- domain with $2-\frac{2}{d}<\alpha\leq 2$, then  for any positive integer $n$, there exists a partition $\Og=\cup_{1\leq j\leq N}\Og_j$  of $\Og$ with $N\leq c_\al n^d$ such  that for every $\xi_j\in \Og_j$, each  $f\in  \Pi^d_n$ and $d-1< p<\infty$,  we have
    $$\frac{1}{2}\sum_{j=1}^N |\Og_j||f(\xi_j)|^p\leq \int_{\Og}|f(\xi)|^pd\xi\leq 2\sum_{j=1}^N|\Og_j||f(\xi_j)|^p.$$
\end{thm}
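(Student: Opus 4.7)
The plan is to mimic the two-dimensional argument of Section~7, so by a standard localization on $\p\Og$ via the coordinate charts of Definition~\ref{def-C2} it suffices to prove a higher-dimensional analog of Proposition~\ref{prop-1-2} on a $C^\al$ graph domain $G=\{(x,y):\ x\in[0,1]^{d-1},\ g(x)-\f14\le y\le g(x)\}$. Once such an oscillation estimate is in hand, Theorem~\ref{thm-2-1:MZ} follows from exactly the same iterative refinement used in~\cite{DaiPry}.

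The construction of the partition parallels the one used in dimension two. Fix $m\in\NN$ with $m\sim n/\va$, set the normal grid $z_j:=j^2/(4m^2)$ for $0\le j\le m$, and partition the tangential cube $[0,1]^{d-1}$ into $N_j^{d-1}$ congruent subcubes with $N_j\in\NN$ chosen so that $N_j\sim m/z_j^{\ga}$, where $\ga=\f1\al-\f12$. The resulting partition $G=\bigcup_{j=1}^m\bigcup_{\mathbf{i}}I_{\mathbf{i},j}$ has total cardinality
\[
\sum_{j=1}^m N_j^{d-1}\sim m^{(d-1)(1+2\ga)}\sum_{j=1}^m j^{-2\ga(d-1)}\le c\,m^d\sim c\,n^d,
\]
where the last step uses $2\ga(d-1)<1$, equivalently $\al>2-\f2d$. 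This is the only place the restriction on $\al$ enters.

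For the oscillation, I pass to graph coordinates $F(x,z):=f(x,g(x)-z)$ and split, as in Proposition~\ref{prop-1-2},
\[
\osc(f;I_{\mathbf{i},j})\le 2\bl(a_{\mathbf{i},j}(f)+b_{\mathbf{i},j}(f)\br),
\]
where $a_{\mathbf{i},j}$ is the maximal deviation of $F(\cdot,z)$ from its tangential average over the subcube of side $1/N_j$, and $b_{\mathbf{i},j}$ is the $z$-variation of that average on $[z_{j-1},z_j]$. To control the $(d-1)$-dimensional term $a_{\mathbf{i},j}$, I invoke the Sobolev-Morrey embedding $W^{1,p}\hookrightarrow L^\infty$ on a cube of side $1/N_j$, which \emph{requires $p>d-1$} and yields
\[
|a_{\mathbf{i},j}|^p\le c\,N_j^{d-1-p}\int_{\text{tang.\ box}}\sup_{z\in[z_{j-1},z_j]}|\nabla_x F(v,z)|^p\,dv.
\]
Since for each fixed $v$ the map $z\mapsto\nabla_xF(v,z)=(\nabla_{\tan,v}f)(v,g(v)-z)$ is a polynomial of degree $\le n$ in $z$, Lemma~\ref{lem-3-1:bern} applied with $\beta=\ga p$ replaces the supremum over $[z_{j-1},z_j]$ by a weighted integral in $z$. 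Summing over $\mathbf{i}$ and $j$ and using $N_j^{-p}\sim (z_j)^{\ga p}/m^p$ produces
\[
\Sigma_1:=\sum_{\mathbf{i},j}|I_{\mathbf{i},j}|\,|a_{\mathbf{i},j}|^p\le \f c{m^p}\iint_G(g(x)-y)^{\ga p}|\nabla_{\tan,x}f(x,y)|^p\,dx\,dy,
\]
which, by the tangential Bernstein inequality of Theorem~\ref{thm-5-2}, is at most $c(n/m)^p\|f\|_{L^p(G_\ast)}^p\le \va^p\|f\|_{L^p(G_\ast)}^p$.

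The companion sum $\Sigma_2$ of $|I_{\mathbf{i},j}|\,|b_{\mathbf{i},j}|^p$ is estimated by Fubini followed by the one-dimensional weighted Bernstein inequality~\eqref{eqn:1d-weighted-bern} applied to $F(u,\cdot)\in\Pi_n^1$ in the normal variable, exactly as in the last step of Proposition~\ref{prop-1-2}; the tangential integration introduces no additional difficulty. Combining the bounds on $\Sigma_1$ and $\Sigma_2$ and choosing $\va$ small completes the oscillation estimate. The main obstacle is the treatment of $\Sigma_1$: the $L^\infty$ oscillation of $F(\cdot,z)$ on a $(d-1)$-dimensional tangential slice must be bounded by the $L^p$ norm of its gradient, and the sharp Sobolev threshold for this is exactly $p>d-1$, which is precisely why this restriction appears in the hypothesis of Theorem~\ref{thm-2-1:MZ}.
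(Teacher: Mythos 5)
Your proposal follows essentially the same route as the paper: the identical anisotropic partition with $N_j \sim m(m/j)^{2\gamma}$ tangential subdivisions, the same counting argument where $2\gamma(d-1)<1$ forces $\alpha>2-\tfrac{2}{d}$, the same $a_{\mathbf{i},j}/b_{\mathbf{i},j}$ oscillation split, and the same application of Lemma~\ref{lem-3-1:bern} and the tangential Bernstein inequality. The only cosmetic difference is that you cite the Sobolev--Morrey embedding $W^{1,p}\hookrightarrow L^\infty$ for $p>d-1$, whereas the paper invokes the pointwise Poincar\'e inequality followed by H\"older; these are the same estimate, and your identification of $p>d-1$ as the exact reason for the hypothesis matches the paper's use of it.
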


Let   $g: [-2d,2d]^{d-1}\to \RR$  be   a continuously differentiable function on $[-2d,2d]^{d-1}$ satisfying       $$|\nabla g(x+t)-\nabla g(x)|\leq L |t|^{\al-1}\   \   \text{ whenever $x, x+t \in [-2d,2d]^{d-1}$},$$
where $L>1$ is a constant.
Define  \begin{align*}
G:&=\Bl\{(x,y):\  \ x\in [0,1]^{d-1},\   \ g(x)-\f 14\leq y \leq g(x)\Br\},\end{align*}and
\begin{align*}
G_\ast:&=\Bl\{(x,y):\  \ x\in [-2d,2d]^{d-1}, \   \  g(x)-2\leq y \leq g(x)\Br\}.
\end{align*}
For $(x,y)\in G_\ast$, we set $\da(x,y):=g(x)-y$ and
$$  \da_n(x,y)=\da(x,y)+\f 1{n^2}=g(x)-y +\f 1 {n^2}.$$

Let $f\in\Pi_n^d$.
Define  $F(x,z):= f(x, g(x)-z)$ for $x\in [-2d, 2d]^{d-1}$ and $z\in\RR$. Then
\[ \p_j F(x,z) = \p_{\xi_j(x)} f(x, g(x)-z),\   \ j=1,2,\cdots, d-1, \]
where $\xi_j(x) = e_j +\p_j g(x) e_d$.
Let  $\ga=\f 1 \al-\f12$ and $1\leq p<\infty$.   Recall that by Theorem 4.1
\begin{equation}\label{6-1}
\int_{[0,1]^{d-1}} \int_0^{\f 14}  z ^{\ga p}
\Bl|\nabla_x F(x, z) \Br|^p\, dz dx  \leq C(\mu, L)^p  n^p \|f\|_{L^p(G_\ast)}^p.
\end{equation}

For the proof of Theorem \ref{thm-2-1:MZ}, it is enough to show

\begin{prop}\label{prop-2-2}  For  any $0<\epsilon<1$,  $\f {2(d-1)}d<\alpha\leq 2$  and  $n\in\NN$,  there exists a partition  $G=\cup_{j=1}^N G_j$  with $N\leq c_{\al,\va}  n^d$ so that for every $f\in  \Pi^d_n$ and $d-1< p<\infty$,
    \begin{equation}\label{l2}
    \sum_{j=1}^N |G_j| | \osc(f; G_j)|^p\leq \epsilon^p \int_{G_*}|f(\xi)|^pd\xi
    \end{equation}
\end{prop}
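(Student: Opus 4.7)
The plan is to mimic the proof of Proposition~\ref{prop-1-2}, replacing the one-dimensional tangential interval by a $(d-1)$-dimensional cube and using a Sobolev-type oscillation estimate in place of the elementary bound used there. Let $m>2n$ be an integer with $1/m\sim \va/n$, set $z_j:=j^2/(4m^2)$ for $j=0,1,\dots,m$ and $\ga:=1/\al-1/2$, and for each $j\in\{1,\dots,m\}$ choose an integer $N_j\ge n$ with $N_j\sim m/z_j^\ga$. At depth $j$, partition $[0,1]^{d-1}$ into $N_j^{d-1}$ equal sub-cubes $\{C_{i,j}\}$ of side $1/N_j$ and set
\[
I_{i,j}:=\{(x,y)\in G:\ x\in C_{i,j},\ z_{j-1}\le g(x)-y\le z_j\}.
\]
The total cardinality is $N=\sum_{j=1}^m N_j^{d-1}\sim m^{d-1+2\ga(d-1)}\sum_{j=1}^m j^{-2\ga(d-1)}$; the hypothesis $\al>2(d-1)/d$ is precisely $2\ga(d-1)<1$, which gives $\sum_j j^{-2\ga(d-1)}\sim m^{1-2\ga(d-1)}$ and hence $N\lesssim m^d\lesssim c_\va n^d$.

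Write $F(x,z):=f(x,g(x)-z)$, so that $\p_k F(x,z)=\p_{\xi_k(x)}f(x,g(x)-z)$ for $k=1,\dots,d-1$. As in the 2D case, decompose
\[
\osc(f;I_{i,j})\le 2\bl[a_{i,j}(f)+b_{i,j}(f)\br],
\]
where $a_{i,j}(f)$ is the supremum over $C_{i,j}\times[z_{j-1},z_j]$ of $|F(x,z)-|C_{i,j}|^{-1}\int_{C_{i,j}}F(u,z)\,du|$, and $b_{i,j}(f)$ is the supremum over $z\in[z_{j-1},z_j]$ of the $z$-oscillation of this $x$-average between $z$ and $z_j$. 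The sum $\Sigma_2:=\sum_{i,j}|I_{i,j}||b_{i,j}|^p$ is handled verbatim as in Proposition~\ref{prop-1-2}: the fundamental theorem of calculus in $z$ together with H\"older's inequality gives $|b_{i,j}|^p\le |C_{i,j}|^{-1}(z_j-z_{j-1})^{p-1}\iint_{C_{i,j}\times[z_{j-1},z_j]}|\p_z F|^p\,du\,dz$, so after summing the integrals combine into a single integral over $[0,1]^{d-1}\times[0,1/4]$, and the univariate weighted Markov-Bernstein inequality applied to $F(u,\cdot)\in\Pi_n^1$ yields $\Sigma_2\le C^p(n/m)^p\|f\|_{L^p(G_\ast)}^p$.

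The key new step is the bound on $\Sigma_1:=\sum_{i,j}|I_{i,j}||a_{i,j}|^p$, where the hypothesis $p>d-1$ enters. For each fixed $z$, the Sobolev-Poincar\'e inequality on the $(d-1)$-dimensional cube $C_{i,j}$ of side $h_j=1/N_j$ yields, under $p>d-1$,
\[
\Bl\|F(\cdot,z)-|C_{i,j}|^{-1}\!\!\int_{C_{i,j}}F(u,z)\,du\Br\|_{L^\infty(C_{i,j})}\le C\,h_j^{1-(d-1)/p}\|\nabla_x F(\cdot,z)\|_{L^p(C_{i,j})}.
\]
Raising to the $p$-th power and taking $\sup_z$, $|a_{i,j}|^p\le C^p N_j^{d-1-p}\sup_{z\in[z_{j-1},z_j]}\int_{C_{i,j}}|\nabla_x F(u,z)|^p\,du$. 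Using $|I_{i,j}|\sim(z_j-z_{j-1})/N_j^{d-1}$, summing over $i$ and exchanging $\sup_z\int_{C_{i,j}}\le\int_{C_{i,j}}\sup_z$ gives
\[
\sum_i|I_{i,j}||a_{i,j}|^p\le C^p(z_j-z_{j-1})N_j^{-p}\int_{[0,1]^{d-1}}\sup_{z\in[z_{j-1},z_j]}|\nabla_x F(v,z)|^p\,dv.
\]
Since $j\sim m\sqrt{z_j}$, one computes $(z_j-z_{j-1})N_j^{-p}\sim m^{-(p+1)}z_j^{\ga p+1/2}$. For each fixed $v$, $\p_k F(v,\cdot)\in\Pi_n^1$ for $k=1,\dots,d-1$, so Lemma~\ref{lem-3-1:bern} applied with $\be=\ga p\ge 0$ bounds $\sum_j m^{-(p+1)}z_j^{\ga p+1/2}\sup_{[z_{j-1},z_j]}|\p_k F(v,\cdot)|^p$ by $\tfrac{C^p}{m^p}\int_0^1|\p_k F(v,z)|^p z^{\ga p}\,dz$. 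Summing over $k$, integrating in $v\in[0,1]^{d-1}$, and invoking the tangential Bernstein inequality~\eqref{6-1} delivers $\Sigma_1\le C^p(n/m)^p\|f\|_{L^p(G_\ast)}^p$. Combining the bounds for $\Sigma_1$ and $\Sigma_2$ and choosing $m$ so that $Cn/m<\va$ yields~\eqref{l2}.

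The principal obstacle is the simultaneous matching of two quantitative restrictions: the Sobolev embedding $W^{1,p}(C_{i,j})\hookrightarrow L^\infty$ on a $(d-1)$-dimensional cube demands $p>d-1$, while the partition count $\sum_j N_j^{d-1}\lesssim n^d$ forces $2\ga(d-1)<1$, i.e.\ $\al>2(d-1)/d$. Both constraints are tight at threshold and coincide exactly with the hypotheses of the proposition.
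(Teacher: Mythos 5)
Your proof is correct and follows essentially the same route as the paper's: the same partition into $N_j^{d-1}$ cubes at depth $j$, the same $a_{i,j}/b_{i,j}$ decomposition, the same reduction of $\Sigma_1$ via Lemma~\ref{lem-3-1:bern} applied in the $z$-variable followed by the tangential Bernstein estimate~\eqref{6-1}, and the same univariate Markov--Bernstein treatment of $\Sigma_2$. The only cosmetic difference is that you invoke the Morrey/Sobolev embedding $W^{1,p}\hookrightarrow L^\infty$ on the $(d-1)$-cube directly, while the paper passes through the pointwise Poincar\'e inequality (Riesz-potential form, on the dilated cube $Q^\ast_{i,j}$) and then applies H\"older; both give exactly the factor $h_j^{1-(d-1)/p}=N_j^{(d-1-p)/p}$ and both require $p>d-1$, so this is the same estimate in different clothing.
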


\begin{proof} Let $m>2n$ be an integer such that $\f 1m \sim \f \va n$.  Let   $z_j:= \f {j^2} {4m^2}$ for  $ j=0,1,\dots, m$. Let $N_j\ge m $ be an integer   such that
    $$ N_j \sim   m\Bl( \f mj\Br)^{2\ga},\  \ j=1,2,\cdots, m. $$  We partition the cube $[0, 1]^{d-1}$ into pairwise disjoint sub-cubes $Q_{i,j}$, $i\in\Ld_j$ of equal  side length $1/N_j$, where  $\# \Ld_j =N_j^{d-1}$.
    We then define a partition $G=\bigcup_{j=1}^m \bigcup_{i\in\Ld_j}  I_{i,j}$  as follows:
    \begin{align*}
    I_{i,j}:&=\Bl\{ (x,y)\in G:\  \  x\in Q_{i,j},\   \  z_{j-1}\leq  g(x)-y \leq z_{j}\Br\},\\
    &\    \    \    \   \  \hspace{5mm}  j=1,2,\cdots, m,\  \ i\in\Ld_j.
    \end{align*}
    Clearly,  the number of sets in the partition $G=\bigcup_{i,j} I_{i,j}$ equals to
    $$\sum_{j=1}^m \#\Ld_j =\sum_{j=1}^m N_j^{d-1} \leq C m^{(2\ga+1)(d-1)} \sum_{j=1}^m j^{-(2\ga)(d-1)} \sim m^d,$$
    where the last step uses the assumption $\f {2(d-1)}d<\alpha$ so that
    $$ 2\ga (d-1) =(\f 2\al -1) (d-1) <1.$$

    Our aim is to show that
    \begin{equation}\label{1-4}
    \sum_{j=1}^m \sum_{i\in\Ld_j}   |I_{i,j}| | \osc(f; I_{i,j})|^p\leq \epsilon^p \int_{G_*}|f(x,y)|^pdxdy,\   \   \ f\in \Pi_n^d,
    \end{equation}
where
    \begin{equation}\label{3-6:bern}
    |I_{i,j}|=|Q_{i,j}| (z_j-z_{j-1})= \f {2j-1}{4m^2N_j^{d-1}},\  \  i\in \Ld_j,\   \   1\leq j\leq m.
    \end{equation}

    Note  that  \begin{align*}
    \osc (f; I_{i,j}) &:=\max_{ \sub{ x,x'\in Q_{i,j}\\
            z,z' \in [z_{j-1}, z_{j}]}} |F(x,z)-F(x',z')|\\
    &\leq 2 \sup_{z \in [z_{j-1}, z_{j}]}  \sup_{x\in Q_{i,j} } \Bl|F(x, z) -\dashint_{Q_{i,j}}F(u, z_j)\, du\Br|\\
    &   \leq 2 \Bl[ a_{i,j} (f) +b_{i,j}(f)\Br],
    \end{align*}
    where
    \begin{align*}
    a_{i,j}(f):&=     \sup_{\sub{x\in Q_{i,j}\\
            z \in [z_{j-1}, z_{j}]}} \Bl|F(x, z) - \dashint_{Q_{i,j}}F(u, z)\, du\Br|,\\
    b_{i,j}(f)&:=\sup_{z\in [z_{j-1}, z_{j}]}\Bl| \dashint_{Q_{i,j} }[F(u, z)-F(u, z_j)]\, du\Br|.
    \end{align*}

    Given a cube $Q$ in $\RR^{d-1}$, we denote by $Q^\ast$ the cube with the same center as $Q$ but $d$ times the length of $Q$.
For the term $a_{i,j}(f)$, using the pointwise   Poinc\'are inequality (see, for instance, \cite[p.11]{Wol}) and H\"older's inequality,  we obtain that
    for  $p>d-1$,
    \begin{align*}
    |   a_{i,j} (f)|^p & \leq \sup_{\sub{x\in Q_{i,j}\\
            z \in [z_{j-1}, z_{j}]}} \Bl(\int_{Q_{i,j}^\ast} \f{|\nabla_u F (u, z)|}{\|u-x\|^{d-2}}\, du \Br)^p \\
    &\leq  C N_j^{-(p-d+1)} \int_{Q_{i,j}^\ast} \sup_{
        z \in [z_{j-1}, z_{j}]} |\nabla_u F (u, z)|^p \, du.
    \end{align*}

    For the term $b_{i,j}(f)$, we have
    \begin{align}
    |b_{i,j}(f)|^p &\leq \Bl(   \int_{z_{j-1}}^{ z_{j}} \dashint_{Q_{i,j}}|\p_d F(u, z)|\, du dz\Br)^p\notag\\
    &\leq N_j^{(d-1)}  \Bl( \f j  {4m^2}\Br)^{p-1} \int_{z_{j-1}}^{ z_{j}} \int_{Q_{i,j}}|\p_d  F(u, z)|^p\, du dz.
    \end{align}
Thus,
    \begin{align*}
    \sum_{j=1}^m \sum_{i\in\Ld_j}   |I_{i,j} | | \osc(f;  I_{i,j})|^p\leq C^p \Bl( \Sigma_1+\Sigma_2\Br),
    \end{align*}
    where
    \begin{align*}
    \Sigma_1&:= \f 1 {m^p} \sum_{j=1}^{m}  \sum_{i\in\Ld_j}  \f {z_j^{\ga p+\f12} } {m}  \int_{Q_{i,j}^\ast} \sup_{
        z \in [z_{j-1}, z_{j}]} |\nabla_u F (u, z)|^p \, du\\
    &\leq  C  \int_{[-d, d]^{d-1}} \Bl[\f 1 {m^p} \sum_{j=1}^{m}    \f {z_j^{\ga p+\f12} } {m} \sup_{
        z \in [z_{j-1}, z_{j}]} |\nabla_u F (u, z)|^p \Br]\, du,\\
    \Sigma_2:&=\sum_{j=1}^{m} \sum_{i\in\Ld_j}   \Bl( \f j  {4m^2}\Br)^{p} \int_{z_{j-1}}^{ z_{j}} \int_{Q_{i,j}}|\p_d  F(u, z)|^p\, du dz=\sum_{j=1}^{m}    \Bl( \f j  {4m^2}\Br)^{p} \int_{z_{j-1}}^{ z_{j}} \int_{[0,1]^{d-1}}|\p_d  F(u, z)|^p\, du dz
    \end{align*}

    Note that for each $1\leq j\leq d-1$,   the function $$\p_j F(v,z)=(\p_j+\p_j g(v)\p_d) f(v, g(v)-z)$$ is an algebraic polynomial of degree at most $n$ in the variable $z$ for each fixed $v\in [-d,d]^{d-1}$, we obtain from Lemma~\ref{lem-3-1:bern} with $\be=\ga p$  that
    for each $u\in [-d, d]^{d-1}$,
    \begin{align*}
    &\f 1{m^p}\sum_{j=1}^m \f {z_j^ {\f 12 +\ga p}}m \sup_{z \in [z_{j-1}, z_{j}]} |\nabla_u  F(u, z)|^p\leq C^p \f 1{m^p} \int_0^{1}z^{\ga p}  |\nabla_u F(u, z)|^p\, dz,\end{align*}
    which, using the tangential Bernstein inequality, implies
\begin{align*}\Sigma_1 & \leq C^p m^{-p}  \int_{[-d,d]^{d-1}}  \int_{g(x)-1}^{g(x)}  |\p_{\tau_x} f(x, y)|^p \da(x,y) ^{\ga p} \, dy\, dx\leq C^p\Bl(\f {n}m\Br)^p  \|f\|_{L^p(G_\ast)}^p. 
    \end{align*}

    Finally, for the sum $\Sigma_2$, we have
    \begin{align*}
    \Sigma_2
    &\leq  \f{C ^p }{m^p}  \sum_{j=1}^m   \Bl( \f {j} {m} \Br)^p   \int_{[0, 1]^{d-1}} \int_{z_{j-1}}^{z_{j}}  |\p_d F(u,z)|^p  dz\, du \\
    &\leq  \f {C^p} {m^p}  \int_{[0,1]^{d-1}} \Bl[\sum_{j=1}^m \int_{z_{j-1}}^{z_{j}}  |\p_d F(u,z)|^p \Bl(\sqrt{z} +\f 1 {m}\Br)^p\, dz\Br]\, du\\
    &=\f {C^p} {m^p}  \int_{[0,1]^{d-1}} \Bl[ \int_{0}^{1/4}  |\p_d F(u,z)|^p \Bl(\sqrt{z} +\f 1 {m}\Br)^p\, dz\Br]\, du\\
    &\leq  C^p m^{-p}  \int_{[0,1]^{d-1}} \Bl[ \int_0^{1/4} |\p_d F(u,z)|^p (\sqrt{z} )^p \, dz\, du+C^pm^{-2p}  \int_{[0,1]^{d-1}} \int_0^{1/4} |\p_d F(u,z)|^p  \, dz\Br]\, du.\end{align*}
    Since $F(u, z)=f(u, g(u)-z)$ is an algebraic polynomial of the variable $z$ of degree at most $n$,
    using   the univariate  Markov-Bernstein-type  inequality  (\cite[Theorem~7.3]{MT2} ),  we obtain
    \begin{align*}
    \Sigma_2\leq    &  C^p\Bl(\f nm\Br)^p  \int_{[0,1]^{d-1}}  \int_0^2 | F(u,z)|^p \, dz\, du \leq C^p \Bl(\f nm\Br)^p \|f\|_{L^p(G_\ast)}^p.\label{3-9:bern}
    \end{align*}

    Putting the above toughener, and taking into account the fact that $m\sim \f n\va$, we obtain
    \begin{align*}
    \sum_{j=1}^m \sum_{i\in\Ld_j}  |I_{i,j} | | \osc(f;  I_{i,j})|^p&\leq  C^p\Bl(\f {n}m\Br)^p  \|f\|_{L^p(G_\ast)}^p\leq \va^p  \|f\|_{L^p(G_\ast)}^p.
    \end{align*}
\end{proof}

\begin{rem}
	For arbitrary $\eps>0$, the Marcinkiewicz-Zygmund inequalities established in \cref{thm-1-1:MZ} and \cref{thm-2-1:MZ} are valid with $\frac12$ and $2$ replaced with $(1-\eps)$ and $(1+\eps)$, respectively, and $c_\alpha$ in the bound on $N$ allowed to depend on $\eps$ as well. The technique of the proof is exactly the same.
\end{rem}

We would like to conclude this paper with the following open question: is it possible to extend \cref{thm-1-1:MZ} to $\alpha=1$ and \cref{thm-2-1:MZ} to $1\le \alpha\le 2-\frac2d$ without allowing any additional logarithmic factors in the bound on $N$?

\begin{bibsection}
    \begin{biblist}
    	
 \bib{Ac}{book}{
 	author={Achieser, N. I.},
 	title={Theory of approximation},
 	note={Translated from the Russian and with a preface by Charles J. Hyman;
 		Reprint of the 1956 English translation},
 	publisher={Dover Publications, Inc., New York},
 	date={1992},
 	pages={x+307},
 	isbn={0-486-67129-1},
 }   	

\bib{bosetal}{article}{
    author={Bloom, Thomas}
    author={Bos, Len},
    author={Calvi, J.~P.},
    author={Levenberg, Norm}
    title={Polynomial interpolation and approximation in $\CC^d$},
    journal={Ann. Polon. Math.},
    volume={106},
    date={2012},
    pages={53-81},
}
\bib{daietal}{article}{
    author={Dai, Feng},
    author={Prymak, Andriy},
    author={Shadrin, Alexei},
    author={Temlyakov, Vladimir},
    author={Tikhonov, Sergei}
    title={Entropy numbers and Marcinkiewicz-type
discretization},
    journal={J. Funct. Anal.},
    volume={281},
    date={2021},
    pages={109090},
}

\bib{DaiPry}{article}{
    author={Dai, Feng},
    author={Prymak, Andriy},
    title={$L^p$-Bernstein inequalities on $C^2$-domains and applications to
        discretization},
    journal={Trans. Amer. Math. Soc.},
    volume={375},
    date={2022},
    number={3},
    pages={1933--1976},
}

\bib{Dit}{book}{
    author={Ditzian, Z.},
    author={Totik, V.},
    title={Moduli of smoothness},
    series={Springer Series in Computational Mathematics},
    volume={9},
    publisher={Springer-Verlag, New York},
    date={1987},
    pages={x+227},
}

\bib{Er}{article}{
    author={Erd\'{e}lyi, Tam\'{a}s},
    title={Notes on inequalities with doubling weights},
    journal={J. Approx. Theory},
    volume={100},
    date={1999},
    number={1},
    pages={60--72},
}

\bib{ke}{article}{
    author={O. D. Kellogg,}
    title={On bounded polynomials in several variables},
    journal={Math. Z.},
    volume={27},
    date={1928},

    pages={55--64},
}

\bib{Kroo}{article}{
    author={Kro\'{o}, Andr\'{a}s},
    title={Markov-type inequalities for surface gradients of multivariate
        polynomials},
    journal={J. Approx. Theory},
    volume={118},
    date={2002},

    pages={235--245},
}

\bib{Kroo1}{article}{
    author={Kro\'{o}, Andr\'{a}s},
    title={Bernstein type inequalities on star--like domains in ${\RR}^d$ with application to norming sets},
    journal={Bull. Math. Sci.},
    volume={3},
    date={2013},
    pages={349-361},
}

\bib{l}{article}{
    author={Lubinsky, Doron},
    title={Marcinkiewicz-Zygmund Inequalities: Methods and Results},
    journal={in Recent Progress in Inequalities (ed. G.V. Milovanovic et al.), Kluwer Academic Publishers, Dordrecht},

    date={1998},

    pages={213--240},
}

\bib{MT2}{article}{
    author={Mastroianni, Giuseppe},
    author={Totik, Vilmos},
    title={Weighted polynomial inequalities with doubling and $A_\infty$
        weights},
    journal={Constr. Approx.},
    volume={16},
    date={2000},

    pages={37--71},
}

\bib{s}{article}{
    author={Sarantopoulos, Yannis},
    title={Bounds on the derivatives of polynomials on Banach Spaces},
    journal={ Math. Proc. Cambr. Philos. Soc.},
    volume={110},
    date={1991},

    pages={307--312},
}

\bib{sz}{book}{
    author={Szeg\H{o}, G\'{a}bor},
    title={Orthogonal polynomials},
    series={American Mathematical Society Colloquium Publications, Vol.
        XXIII},
    edition={4},
    publisher={American Mathematical Society, Providence, R.I.},
    date={1975},
    pages={xiii+432},
}

\bib{Wol}{book}{
    author={Wolff, Thomas H.},
    title={Lectures on harmonic analysis},
    series={University Lecture Series},
    volume={29},
    note={With a foreword by Charles Fefferman and a preface by Izabella
        \L aba;
        Edited by \L aba and Carol Shubin},
    publisher={American Mathematical Society, Providence, RI},
    date={2003},
    pages={x+137},}

    \end{biblist}
\end{bibsection}

\end{document}